\theoremstyle{plain}
\newtheorem{theorem}{Theorem}[section]
\newtheorem{lemma}[theorem]{Lemma}
\newtheorem{corollary}[theorem]{Corollary}
\newtheorem{proposition}[theorem]{Proposition}
\theoremstyle{definition}
\theoremstyle{remark}
\newtheorem{remark}{Remark}
\begin{document}

\title[]
      {The Birkhoff-Poritsky conjecture for centrally-symmetric billiard tables}

\date{30 July 2020}
\author{Misha Bialy}
\address{School of Mathematical Sciences, Raymond and Beverly Sackler Faculty of Exact Sciences, Tel Aviv University,
Israel} 
\email{bialy@post.tau.ac.il}
\thanks{MB was partially supported by ISF grant 580/20.}

\author{Andrey E. Mironov}
\address{Novosibirsk State University, Novosibirsk, Russia, and Sobolev Institute
	of Mathematics of the Siberian Branch of the Russian Academy of Sciences,
	Novosibirsk, Russia.
	}
\email{mironov@math.nsc.ru}
\thanks{
	AEM was supported by Mathematical Center in Akademgorodok under agreement
	No 075-2019-1675 with the Ministry of Science and Higher Education of the Russian
	Federation.
	}


\begin{abstract} In this paper we prove the Birkhoff-Poritsky conjecture for centrally-symmetric $C^2$-smooth convex planar billiards. We assume that the domain $\mathcal A$ between the invariant curve of $4$-periodic orbits and the boundary of the phase cylinder is foliated by $C^0$-invariant curves.  Under this assumption we prove that the billiard curve is an ellipse. 
	For the original Birkhoff-Poritsky formulation we show that if a neighborhood of the boundary of billiard domain has
	a $C^1$-smooth foliation by convex caustics of rotation numbers in the interval (0; 1/4]
	then the boundary curve is an ellipse. In the language of first integrals one can assert that 
{if the billiard inside a centrally-symmetric $C^2$-smooth convex curve $\gamma$ admits a $C^1$-smooth first integral with non-vanishing gradient on $\mathcal A$, then the curve $\gamma$ is an ellipse. }

	The main ingredients of the proof are : (1) the non-standard generating function for convex billiards discovered in \cite{BM}, \cite{B};  (2) the
	remarkable structure of the invariant curve consisting of $4$-periodic orbits;  and (3) the integral-geometry approach initiated in
	\cite{B0}, \cite{B1} for rigidity results of circular billiards. Surprisingly, we establish a Hopf-type rigidity for billiard in ellipse.

\end{abstract}

\maketitle


\section{Introduction}
\subsection{Formulation of the conjecture}
It was G. D. Birkhoff who introduced and studied systematically billiards in strictly convex planar domains.
His results on periodic orbits and invariant curves for twist maps are very important for the modern theory, including  this paper. 
A conjecture attributed to Birkhoff was formulated by Hillel Poritsky \cite{Poritsky}, asking if the only integrable convex billiards are ellipses. When one formulates this conjecture  it is very important to define what "integrability" means. 
One of the strongest form of the conjecture uses the assumption of  the existence of a neighborhood of the boundary curve foliated by caustics. Even
more generally (see \cite{katok}) one can assume that the ends of the phase cylinder are foliated by rotational (=winding once around the cylinder and simple) invariant curves. We refer to the books \cite{Tab}, \cite{KT} for the general theory of billiards.

Several important results were obtained towards the positive resolution of this conjecture. 
Let us mention some of them. An approach by Delshams and Ramirez-Ros \cite{delshams} is based on the proof of splitting of separatrices  for perturbations of elliptic billiard.
Another approach, suggested by classical mechanics, is to look for convex billiards admitting an additional conservation law, which is polynomial in momenta.
Here important  results were obtained by Bolotin \cite{bolotin}, Tabachnikov \cite{T}, Bialy and Mironov \cite{BM}, Glutsyuk and Shustin \cite{GS}  and Glutsyuk \cite{G}.
Innami \cite{Innami}, using Aubry-Mather theory,  and later Arnold and Bialy  \cite{AB}, using purely geometric ideas, proved that the billiard must be an ellipse if there exists a sequence of convex caustics with rotation numbers tending to $1/2$.
Recently, the series of papers by Avila, Kaloshin and De Simoi \cite{AKS} and by Kaloshin and Sorrentino,  \cite{K-S}, \cite{K-S1} were published where {\it local }Birkhoff conjecture is proved.
Here locality means  a suitable neighborhood of ellipses in a functional space.

\subsection{The approach based on total integrability} 
Our approach in this paper is based on the rigidity of Total Integrability (a term suggested by A. Knauf, meaning existence of a foliation of the whole phase space by invariant tori). 
Analyzing E. Hopf proof on geodesic flows with no conjugate points it was understood \cite {B0}, \cite{B1} that Hopf's result can be generalized to  
convex billiards.  As a corollary one gets  that the only {\it totally integrable} billiard is circular. Thus, the circular billiard is analogous to the geodesic flow on the flat torus. 

It is amazing that elliptic billiards also can be distinguished as a rigid objects in both the variational and total integrability settings. The difference with circular billiards appears in the requirement of total integrability on a  certain neighborhood of the boundary of the phase cylinder.
Notice, that in \cite{B} a result of rigidity for  total integrability on a  part of the phase cylinder was obtained for Gutkin billiards.

Our approach in this paper is restricted to the case of centrally-symmetric curves. But our smoothness assumptions are minimal (continuity and even less).  Also the curve is not assumed to be close to an ellipse.

Let us indicate the main tools of our approach. The first ingredient is a non-standard  generating function for the billiard ball map leading to the twist property with respect to another vertical foliation of the phase cylinder $\mathbf A$. 

The second ingredient is the remarkable structure of the invariant curve consisting of 4-periodic orbits. 

The third ingredient is the use of integral geometry essentially as for the circular billiards, but now using the non-standard generating function, and carefully chosen weights. This is a new element in the integral geometry tool.

{ Interestingly, the integral inequalities which we get on this way are much more complicated than in the circular case, but mysteriously can be simplified to the Wirtinger inequality. These calculations are performed in Subsection \ref{reduction}.
	Before performing these calculations we checked the inequalities numerically.
	 It would be very interesting to understand in a more conceptual way these simplifications.  }

\subsection{Main result and discussion}
Throughout this paper, we shall denote by $\gamma\subset\mathbf R^2$ a simple closed centrally-symmetric $C^2$ curve of positive curvature. We shall fix the counterclockwise orientation on $\gamma$. Let $\mathbf A$ be the phase cylinder of the billiard ball map $T$, i.e. the space of all oriented lines intersecting $\gamma$.
Our main result is the following:
\begin{theorem}\label{main1}
	Suppose that the billiard ball map $T$ of $\gamma$  has a continuous rotational (=winding once around the cylinder and simple) invariant curve $\alpha\subset \mathbf A$  of rotation number $1/4$, consisting of $4$-periodic orbits.
	Let  $\mathcal A\subset \mathbf A$ be the domain between the curve $\alpha$ and the boundary  $\{\delta=0\}$ of the phase cylinder, where $\delta$ denotes the incoming angle of the line. 
	
	If  $\mathcal A$ is
	foliated by continuous rotational invariant curves, then $\gamma $ is an ellipse.
\end{theorem}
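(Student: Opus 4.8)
The plan is to exploit the three ingredients advertised in the introduction and turn the foliation hypothesis into a rigid integral identity. First I would set up the non-standard generating function of \cite{BM}, \cite{B} for the billiard map $T$ on the phase cylinder $\mathbf A$, using coordinates adapted to the central symmetry. The key structural input is the invariant curve $\alpha$ of $4$-periodic orbits with rotation number $1/4$: a $4$-periodic orbit in a centrally-symmetric table is forced to be an inscribed (generically) rectangle, so $\alpha$ encodes a one-parameter family of inscribed rectangles whose geometry I would parametrize explicitly. I expect this family to pin down a natural ``vertical'' foliation of the cylinder with respect to which the non-standard generating function is monotone (twist), and to provide the boundary data along $\alpha$ for the integral-geometric computation.

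\medskip

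Next I would implement the integral-geometry argument in the spirit of \cite{B0}, \cite{B1}, but adapted to the non-standard generating function. Given the $C^0$-foliation of $\mathcal A$ by rotational invariant curves, each leaf carries an invariant measure, and I would write down the associated first integral and differentiate the invariance relation along the flow. The idea, following Hopf-type arguments, is to integrate a suitably weighted quadratic expression built from the second derivatives of the generating function over $\mathcal A$, using the foliation to perform the integration leaf-by-leaf. The central symmetry and the $4$-periodic structure should make the boundary contributions along $\alpha$ and along $\{\delta=0\}$ either vanish or combine into a controllable term. The output should be an integral inequality that is saturated precisely when the curvature of $\gamma$ satisfies the ODE characterizing ellipses.

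\medskip

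The main obstacle, as the introduction itself signals, is the choice of weights and the reduction of the resulting integral inequality. A naive Hopf weight gives an inequality that is far more complicated than in the circular case, and the nontrivial step is to find the carefully chosen weights for which the complicated expression collapses to a single tractable inequality. I anticipate that after this reduction the problem becomes a one-dimensional inequality for a periodic function (the support function or curvature radius of $\gamma$, expressed in the symmetry-adapted angular variable), and that it can be recast as the Wirtinger inequality $\int |f'|^2 \geq \int |f|^2$ for mean-zero $\pi$-periodic functions. Central symmetry is what guarantees the correct period and the vanishing of the relevant mean, so it is essential here rather than merely convenient.

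\medskip

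Finally, I would analyze the equality case. Wirtinger equality forces the relevant function to be a pure first harmonic, i.e. $f(\theta)=a\cos\theta+b\sin\theta$ plus constant in the appropriate variable; translating this back through the support-function or curvature-radius description yields exactly the support function of an ellipse. I would then verify, conversely, that an ellipse realizes all the hypotheses (a rotational $4$-periodic invariant curve and a foliation of $\mathcal A$ by confocal caustics), establishing that the equality case is nonempty and thus concluding that $\gamma$ must be an ellipse. The step I expect to consume the most effort is the weighted integral reduction to Wirtinger, both in discovering the weights and in rigorously justifying the integrations under only $C^0$ regularity of the foliation.
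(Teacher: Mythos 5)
Your outline reproduces the paper's advertised architecture (non-standard generating function, structure of the $4$-periodic invariant curve, weighted integral geometry, Wirtinger, equality case), but two of its load-bearing steps are wrong or would fail as stated. First, the structural claim about $\alpha$ is incorrect: a $4$-periodic orbit on $\alpha$ is \emph{not} an inscribed rectangle. If the orbit quadrilateral were a rectangle, the reflection law at each vertex (where incoming and outgoing edges make equal angles $\delta$ with the tangent, hence meet at angle $\pi-2\delta$) would force $\delta\equiv\pi/4$ along all of $\alpha$, a far stronger and generally false constraint. What is actually true (Theorem \ref{4-periodic}) is that each such orbit is a \emph{parallelogram} whose diagonals pass through the center, while it is the four tangent lines at its vertices that form a rectangle. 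Moreover, this is not ``forced'' pointwise by central symmetry: an individual $4$-periodic orbit of a centrally-symmetric table need not be centrally symmetric. The paper must invoke Aubry--Mather theory (orbits on a rotational invariant curve are minimizing, and a rotational invariant curve of rational rotation number consisting of periodic orbits is the unique one with that rotation number) to get $\alpha=S(\alpha)$ for the central symmetry $S$, hence $\pi$-periodicity of $d(\psi)$, and then a separate monotonicity argument to prove $\widetilde T_{\alpha}^2(\psi)=\psi+\pi$. The payoff is the exact relations $d\left(\psi+\frac{\pi}{2}\right)=\frac{\pi}{2}-d(\psi)$, $h(\psi)=R\sin d(\psi)$, $h^2(\psi)+h^2\left(\psi+\frac{\pi}{2}\right)=R^2$, and these are precisely what gets substituted into the integral inequality; with a ``rectangle'' input the whole subsequent computation collapses to a triviality that proves nothing.

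Second, your plan to ``write down the associated first integral and differentiate the invariance relation along the flow'' cannot be executed under the theorem's hypothesis: the foliation is only $C^0$, its leaves need not be differentiable, and there is no smooth first integral to differentiate. The paper's workaround is Birkhoff's theorem (every rotational invariant curve is a Lipschitz graph with respect to either vertical foliation), which yields a \emph{measurable} monotone $T$-invariant line bundle of upper tangent lines; all the analysis is then carried out on the slope function $\omega$ of that bundle and the positive transition factors $\nu_1$, using the generating-function identities, the elementary inequality $p^2\nu_1+p_1^2\nu_1^{-1}\geq 2pp_1$, boundedness of $\omega$, and invariance of the measure $S_{12}\,d\varphi\,d\varphi_1$ to annihilate the exact term. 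Relatedly, the ``carefully chosen weights'' you leave unspecified are $p^2$ and $p_1^2$ (squared signed distances of the chord lines to the center), chosen exactly so that ellipses, with their quadratic-in-momenta integral, saturate the inequality; finding them and then verifying the surprising collapse, after symmetrization $\psi\mapsto\psi+\frac{\pi}{2}$ and repeated integration by parts, to $\frac{\pi R^4}{512}\int_0^\pi\left((\mu'')^2-4(\mu')^2\right)d\psi$ with $\mu=\cos 2d=1-2h^2/R^2$ is the actual mathematical content, not a routine reduction. Two minor points: Wirtinger for mean-zero $\pi$-periodic functions reads $\int (f')^2\geq 4\int f^2$ (you dropped the factor $4$), and your concluding converse verification that ellipses satisfy the hypotheses is logically superfluous --- the equality case directly forces $h^2(\psi)=\frac{R^2}{2}(1-A\cos 2\psi)$, $A\in[0,1)$, which is the squared support function of an ellipse.
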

\begin{corollary}
If a neighborhood of the boundary of the billiard domain is $C^1$-foliated by convex caustics of rotation numbers $(0, 1/4]$, then $\gamma$ is an ellipse.
\end{corollary}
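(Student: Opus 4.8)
The plan is to deduce the corollary directly from Theorem \ref{main1} by translating the hypothesis of a $C^1$ foliation by convex caustics in the billiard table into the hypothesis of a continuous foliation of $\mathcal A$ by rotational invariant curves, and by identifying the caustic of rotation number $1/4$ with the curve $\alpha$. The point is that essentially all the analytic work lives in Theorem \ref{main1}; the corollary is a matter of checking that the caustic data supplies exactly the invariant-curve data required there.

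First I would recall the classical correspondence between convex caustics and invariant curves of $T$. To a convex caustic $C$ inside $\gamma$ one associates the set $\Gamma_C\subset\mathbf A$ of oriented lines tangent to $C$; since reflection in $\gamma$ preserves tangency to $C$, the curve $\Gamma_C$ is $T$-invariant, rotational, and has rotation number equal to that of $C$. Because the assignment $C\mapsto\Gamma_C$ is given by the tangent-line construction, it depends continuously (in fact $C^1$) on the leaf, so a $C^1$ foliation by caustics yields a foliation of the corresponding region of $\mathbf A$ by $C^1$, hence continuous, rotational invariant curves. I would then locate this family in the phase cylinder: as the rotation number $\rho$ of a caustic decreases to $0$ the caustic expands toward $\gamma$ and $\Gamma_C$ approaches the grazing boundary $\{\delta=0\}$, while as $\rho$ increases to $1/4$ the caustics sweep inward and the invariant curves sweep toward the leaf of rotation number $1/4$. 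Thus the caustics of rotation number in $(0,1/4)$ foliate precisely the domain $\mathcal A$ between $\{\delta=0\}$ and the $1/4$-leaf, which is exactly the continuous foliation of $\mathcal A$ demanded by Theorem \ref{main1}.

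Next I would verify that the distinguished leaf of rotation number $1/4$ is the curve $\alpha$ consisting of $4$-periodic orbits. For a convex caustic with rational rotation number $p/q$ every circumscribed billiard trajectory is a closed $q$-gon; specializing to $\rho=1/4$, every trajectory tangent to this caustic is a $4$-periodic orbit, so its invariant curve is a continuous rotational invariant curve consisting entirely of $4$-periodic orbits, i.e.\ it is the curve $\alpha$ of Theorem \ref{main1}. With $\mathcal A$ foliated by continuous rotational invariant curves and with $\alpha$ the $1/4$-leaf of the required form, Theorem \ref{main1} applies and forces $\gamma$ to be an ellipse.

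The step requiring the most care is the identification of the $1/4$-leaf: one must be sure it genuinely consists of $4$-periodic orbits and is rotational, rather than merely containing a finite Birkhoff periodic orbit joined by heteroclinic arcs, as a general circle homeomorphism of rational rotation number could. Here it is essential that we are handed an honest $C^1$ convex caustic and not just an invariant curve, since the preserved-tangency property rigidifies the dynamics on its invariant curve to be periodic on the nose. A secondary technical point is to confirm that the limit $\rho\to 0^+$ indeed reaches the boundary $\{\delta=0\}$, so that the swept region is all of $\mathcal A$, and that the $C^1$-regularity and orientation of the foliation match the precise setup of Theorem \ref{main1}; these are routine once the caustic-to-invariant-curve dictionary is in place.
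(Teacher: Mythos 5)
Your proposal is correct in outline and, like the paper, it is a reduction to Theorem \ref{main1}: the one substantive issue is why the leaf of rotation number $1/4$ consists \emph{entirely} of $4$-periodic orbits, rather than of finitely many Birkhoff periodic orbits joined by connecting arcs. You and the paper resolve this same issue with genuinely different ingredients. The paper invokes a folkloric fact about foliations: any rotational invariant curve that is a leaf of an invariant $C^1$-foliation inherits an absolutely continuous invariant measure, and a circle homeomorphism with rational rotation number preserving such a measure has all orbits periodic. This fact uses nothing about caustics, which is why the same sentence in the paper also proves Corollary \ref{F} on first integrals. You instead invoke the Poncelet-style closure theorem for convex caustics: if one trajectory tangent to a convex caustic closes up after $q$ reflections, then all of them do. That statement is true and classical, and it buys something the paper's argument does not: it needs only the single caustic of rotation number $1/4$, the rest of the foliation being used merely to fill $\mathcal A$ with continuous rotational invariant curves. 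The caveat is that your stated justification --- ``the preserved-tangency property rigidifies the dynamics'' --- is not an argument: preserved tangency says only that the curve $\Gamma_C$ is $T$-invariant, which is precisely the hypothesis you yourself observe is insufficient. The closure theorem is a real theorem, proved by producing a canonical invariant measure with positive density on the set of lines tangent to the caustic (via the string construction and the commutation of the induced maps for different string lengths, or via an explicit invariant density as in Gutkin--Katok). So either cite that theorem as such, or note that the $C^1$-regularity of the foliation already hands you the paper's folkloric fact, in which case the caustic structure is not needed at this step at all.
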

\begin{corollary}\label{F}
If the billiard ball map $T:\mathbf A\rightarrow\mathbf A$ has a $C^1$-first integral $F$ with non-vanishing gradient on $\mathcal A\cup\alpha$, then $\gamma$ is an ellipse.
\end{corollary}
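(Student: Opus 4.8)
The plan is to deduce the Corollary from Theorem~\ref{main1}: I will convert the first integral $F$ into the $C^0$ foliation of $\mathcal A$ by rotational invariant curves required there. Since $F$ is $C^1$ and $\nabla F\neq 0$ on $\mathcal A\cup\alpha$, the function $F$ is a submersion, so every value is regular and each level set $\{F=c\}$ is a $C^1$ embedded $1$-manifold; by the implicit function theorem these level sets assemble into a $C^1$ foliation of $\mathcal A$. Because $F$ is a first integral, $F\circ T=F$, hence $T$ maps each leaf $\{F=c\}$ onto itself. Thus $F$ furnishes a $C^1$ foliation of $\mathcal A$ by $T$-invariant curves, and the whole task reduces to recognizing the leaves as rotational invariant curves.

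To identify the leaves, I would argue topologically in the annulus $\mathcal A$. A connected invariant leaf is an embedded circle or a properly embedded line. A contractible leaf is impossible: it would bound a disk $D\subset\mathcal A$ with $T(D)=D$, and since $T$ is an orientation- and area-preserving homeomorphism, Brouwer's theorem would give a fixed point of $T$ in $D$; but the rotation number on $\mathcal A$ lies in $(0,1/4]$ and is nowhere zero, a contradiction. A properly embedded line-leaf is likewise excluded, because $F$ is a globally defined function without critical points on the annulus: its restriction to a loop winding once around the cylinder cannot be strictly monotone, which forces all level sets to be essential simple closed curves rather than boundary-to-boundary arcs. Hence every leaf is a rotational invariant curve, and these leaves foliate all of $\mathcal A$.

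A $C^1$ foliation is in particular continuous, so both hypotheses of Theorem~\ref{main1} are now in force: the invariant curve $\alpha$ of $4$-periodic orbits is present by assumption, and $\mathcal A$ is foliated by continuous rotational invariant curves. Theorem~\ref{main1} then yields that $\gamma$ is an ellipse.

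I expect the genuine difficulty to lie not in the local foliation statement but in the behavior at the two ends of $\mathcal A$. Near $\{\delta=0\}$ the billiard map degenerates to the identity and the rotation number tends to $0$, so invariance imposes no constraint there, while on $\alpha$ the rotation number $1/4$ is rational, so $F$ need not a priori be constant along $\alpha$. Using continuity of $F$ together with the monotone dependence of the rotation number across $\mathcal A$, I would verify that the rotational leaves exhaust the region up to $\{\delta=0\}$ and accumulate on $\alpha$ as the extremal level set $\{F=\max F\}$; controlling this exhaustion is where the real work of the reduction lies.
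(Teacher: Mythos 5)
Your overall strategy---convert the level sets of $F$ into the foliation required by Theorem~\ref{main1}---is the same as the paper's, but you skip the one step that constitutes the paper's actual proof, and it is a genuine gap. Theorem~\ref{main1} requires that $\alpha$ \emph{consist of $4$-periodic orbits}. You write that this ``is present by assumption,'' but in Corollary~\ref{F} the dynamical hypothesis is only the existence of the first integral, with $\alpha$ a rotational invariant curve of rotation number $1/4$ bounding $\mathcal A$; the $4$-periodicity of every orbit on $\alpha$ is exactly what must be \emph{deduced} from $F$. This is why the paper invokes its ``folkloric fact'': a rotational invariant curve which is a leaf of an invariant $C^1$-foliation inherits an absolutely continuous invariant measure, and a circle homeomorphism with rational rotation number preserving such a measure has all orbits periodic. (If $4$-periodicity were already assumed, citing this fact for Corollary~\ref{F} would be pointless.) Concretely, the argument you are missing runs as follows: if some $x\in\alpha$ is not $4$-periodic, it lies in a complementary interval $(a,b)$ of $\mathrm{Fix}(T^4|_\alpha)$ with $T^{4n}x\to b$ and $T^{-4n}x\to a$ monotonically; continuity and invariance of $F$ then force $F\equiv F(a)=F(b)$ on $[a,b]$, so this arc sits inside a level-set leaf; on that leaf the Gelfand--Leray measure (density $1/|\nabla F|$ with respect to arclength, which is positive, bounded and non-atomic on the compact arc because $\nabla F\neq0$ on $\mathcal A\cup\alpha$) is preserved by $T^4$, since $T$ preserves both the area form $dp\wedge d\varphi$ and $F$; hence the intervals $[T^{4n}x,\,T^{4(n+1)}x)$ are pairwise disjoint subsets of $[a,b]$ of equal positive measure, which is absurd. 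Your closing paragraph shows you sensed that the difficulty sits at $\alpha$ (you correctly note $F$ need not a priori be constant there), but you then redirect to ``exhaustion'' of $\mathcal A$ by leaves---which is automatic once every leaf is known to be a rotational curve---and leave the real issue unresolved.

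A secondary weakness: your topological identification of the leaves as rotational curves is not sound as written, though the paper glosses over this step as well. For the contractible case, Brouwer needs $T(D)=D$, but $F\circ T=F$ only says $T$ permutes the components of a level set, and even granting $T^m(D)=D$ for some $m$, Brouwer yields a periodic point, which is no contradiction. A cleaner route avoids dynamics entirely: $F$ is constant on a contractible leaf, so $F$ restricted to the closed disk it bounds (which lies in $\mathcal A$) attains an interior extremum, i.e.\ a critical point, contradicting $\nabla F\neq0$. For the non-compact case, the reason you give---that $F$ cannot be strictly monotone along a loop---rules out nothing: the function $\re z$ on the annulus $\{1<|z|<2\}$ has no critical points yet has arc level sets, so regularity of $F$ alone can never exclude boundary-to-boundary leaves. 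Excluding them here genuinely requires the $T$-invariance of $F$ together with area preservation (for instance, starting from the rotational circle leaves adjacent to $\alpha$ and propagating downward); as stated, your argument for this step is not a proof.
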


These corollaries follow immediately from Theorem \ref{main1} using the following folkloric fact. 
Every rotational invariant curve which is a leaf of an invariant  $C^1$-foliation always inherits an absolutely continuous invariant measure. Thus, if this invariant curve has a rational rotation number, then all orbits on it are periodic.

Several remarks and questions arise naturally:
  
 1. An interesting question is whether one can consider a smaller region than $\mathcal A$ for the approach of Total integrability suggested in this paper. For example, can one replace $\alpha$ with the invariant curve of period $8, 16, $ etc..?
 
 2. We hope that the central-symmetry restriction can be relaxed.
 
 3. Recall also the old open problem on coexistence of caustics with different rational rotation numbers for billiards other than ellipses.
 
 4. It would be interesting to establish analogous results for other billiard models that lead to twist maps of the cylinder, 
 in particular,  for Outer billiards, and the recently introduced Wire billiards \cite{BMT}. 
 
 5. In the light of the main result of the paper, 
 one would like to reconsider rigidity for continuous-time systems, like geodesic flows and Hamiltonian systems with a potential.

\section*{Acknowledgments}
It is a great pleasure to thank Serge Tabachnikov for encouraging and illuminating discussions we had for many years playing mathematical billiards.

We have discussed the Birkhoff conjecture with many mathematicians and we thank here all of them.

We are also grateful to anonymous referee for careful reading and improving suggestions.

The first author would like to thank the participants of the course "Geometry of billiards" delivered at Tel Aviv University
 in the Spring of 2020 and especially, Shvo Regavim, who gave a new proof of the parallelogram property for ellipses.

\section{\bf New generating function for the billiard map and the Twist condition}
Let $\gamma $ be a simple closed convex curve of positive curvature in $\mathbf R^2$. 
We fix the counterclockwise orientation on $\gamma$. We shall use the arclength parametrization $s$ as well as the parametrization by 
the angle $\psi$ formed by the outer unit normal $n$ to $\gamma$ with a fixed direction. These two parametrizations are related by $d\psi=k ds$, where $k$
is the curvature.

The natural phase space of the Birkhoff billiard inside $\gamma$ is the space $\mathbf A$ of all oriented lines that  intersect $\gamma$.
This space is topologically  a cylinder and we shall refer to it as the phase cylinder of $T$.
The billiard map $T$ acts on $\mathbf A$ by the reflection law in $\gamma$.
The phase cylinder carries a natural symplectic structure that can be described in two ways.

1.  Each oriented line is identified with the pair $(\cos\delta,s), \  \delta\in(0,\pi)$, where $\gamma(s)$ is the incoming point and $\delta$ is the angle between the line and the tangent $\gamma'(s)$. In these coordinates the symplectic form is 
$d\lambda$, where $\lambda=\cos\delta\, ds$. This is a standard symplectic form for the space $B^*\gamma$ of all (co-)tangent vectors of length $<1$, where $\cos\delta$ plays the role of momentum variable.
In this description, the boundaries of the phase cylinder are $\{\cos\delta=\pm1\}$.
	\begin{figure}[h]
	\centering
	\includegraphics[width=0.6\textwidth]{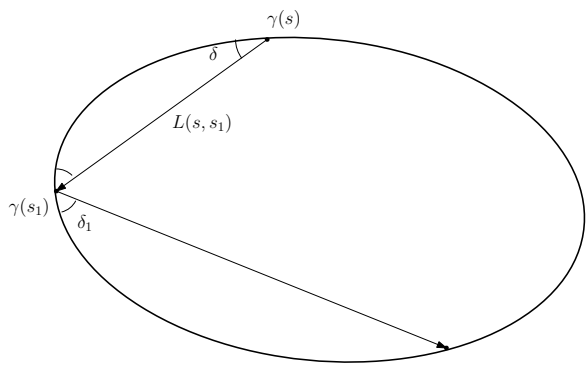}
	\caption{Generating function $L$ corresponding to the 1-form $\lambda$.}
	\label{fig:L}
\end{figure}

2.  The second way to get the same symplectic form (without any reference to $\gamma$) is to fix an origin in $\mathbf R^2$ 
and to introduce the coordinates $(p,\varphi)$ on the space of all oriented lines, so that $\varphi$ is the angle between the right unit normal to the line and the horizontal and $p$ is the signed distance to the line (see Fig. \ref{fig:S}). In this way the space of oriented lines is identified with $T^*S^1$. Moreover,  the standard symplectic form  $d\beta$\ with $\beta=pd\varphi$
coincides with the symplectic form described before. In this description $p$ plays the role of momentum variable. In the coordinates $(p,\varphi)$ the boundaries of the phase cylinder are $\{(p,\varphi):p= h(\varphi)\}, \{(p,\varphi):p=-h(\varphi +\pi)\}$, where throughout this paper  we shall denote by $h$ the support function of $\gamma$ with respect to $0$.

The billiard ball map is a symplectic map which can be defined with the help of generating functions.
In the first case this function is (see Fig. \ref{fig:L}) the length of the chord $L$ (this was used extensively by Birkhoff).
Namely, $$ T^*\lambda-\lambda=\cos\delta_1\,ds_1-\cos\delta\, ds=dL, \quad L(s,s_1)=|\gamma(s)-\gamma(s_1)|.
$$

For the second choice of the coordinates $(p,\varphi)$, the generating function was found first in \cite{BM} for the 2-dimensional case and then in \cite{B}  for higher dimensions (see \cite{BT} for further applications). This function $S$ is determined by the formulas:
$$
T^*\beta-\beta=p_1\,d \varphi_1-p\,d\varphi = dS, \quad S(\varphi ,\varphi _1 )=2h(\psi)\sin\delta,
$$
where $$\psi:=\frac{\varphi_1+\varphi}{2},\quad \delta:=\frac{\varphi_1-\varphi}{2}.$$
	\begin{figure}[h]
	\centering
	\includegraphics[width=0.8\textwidth]{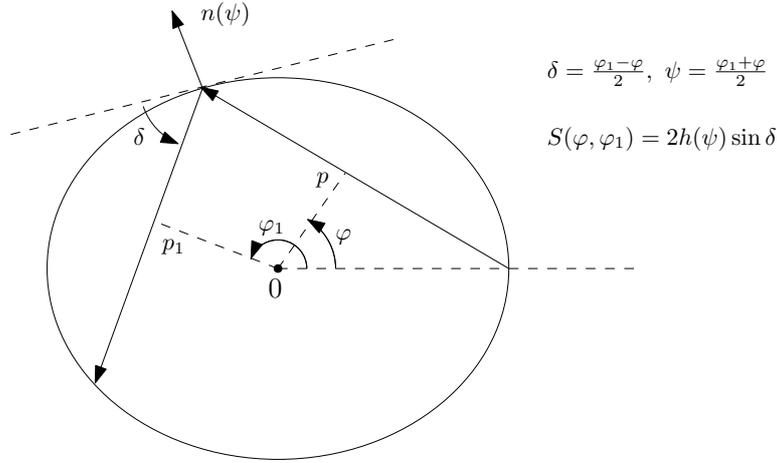}
	\caption{Generating function $S$ corresponding to the 1-form $\beta$}
	\label{fig:S}
\end{figure}
These formulas  mean that the line with coordinates $(p,\varphi)$ is mapped into the line $(p_1,\varphi_1)$ (see Fig. \ref{fig:S}) if and only if
\begin{equation}\label{generating}
\begin{split}
p&=-S_1(\varphi,\varphi_1)  =  h(\psi) \cos \delta-h'(\psi) \sin \delta,\\
p_1&=S_2(\varphi,\varphi_1)  =  h(\psi) \cos \delta+h'(\psi) \sin \delta.
\end{split}
\end{equation}

It is well known that the billiard ball map  satisfies the twist condition with respect to the symplectic coordinates $(\cos\delta,s)$, meaning that the cross-derivative $L_{12}(s,s_1)>0$. 

Here and below we use subindex $1$ and/or $2$ for the partial derivative with respect to the first or the second argument respectively. 

The geometric meaning of twist condition is that under the action of the differential of $T$ a vertical vector (tangent to the fibres $\{s=const\}$) deviates from the vertical .

Remarkably, the twist condition is  also satisfied for the generating function $S$ associated with the symplectic coordinates $(p,\varphi)$.
Indeed, one easily computes the cross derivative (see Proposition \ref{derivatives }):
$$
S_{12}(\varphi,\varphi_1)=\frac{1}{2}(h'' +h) \sin\delta=\frac{1}{2}\rho \sin\delta>0,
$$ where $\rho$ is the radius of curvature of $\gamma$.

Geometric meaning of this condition is that a beam of parallel lines ceases to be parallel after  reflection in $\gamma$.

Twist condition allows us to use Birkhoff's theorem on the  rotational invariant curves and the Aubry-Mather theory on variational properties of orbits.
\section{\bf Reduction of Theorem \ref{main1}}\label{sect:reduction}
In this section we shall present  two ways to reduce the statement of the main theorem to an even more geometric statement.
 This reduction is quite well understood by now. It was done for billiards in \cite{B0},   \cite{W}, \cite{B1} and further developed  in \cite{Arnaud}, \cite{Arnaud1}.

 For the billiard map $T:\mathbf A\rightarrow\mathbf A$ two vertical foliations arise naturally,  $\{s=const\}$ and $\{\varphi=const\}$, depending on which set of the symplectic coordinates we use. 

Let $\mathcal L$ be a sub-bundle of {\emph {oriented}} tangent lines to 
the domain $\mathcal A\subset \mathbf A$. 
We will say (following \cite{W}) that $\mathcal L$ is monotone with respect to the vertical foliation $\{s=const\}$ (respectively, $\{\varphi=const\}$), 
if the line $l(x)\subset
T_{x} \mathcal A$ is  non-vertical with respect to the vertical foliation $\{s=const\}$ (respectively, $\{\varphi=const\}$)  for all
$x\in\mathcal A$ and the orientation on the lines is given by $ds>0$ (respectively, $d\varphi>0$).

 The following lemma holds:
 \begin{lemma}\label{bundles}
 	Let $\mathcal L$ be a measurable monotone sub-bundle with respect to the vertical foliation $\{\varphi=const\}$, that is invariant under $T$.
 	Then  $\mathcal L$ is monotone also with respect to the vertical foliation $\{s=const\}$, and vice versa.
 \end{lemma}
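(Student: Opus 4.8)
The plan is to argue pointwise in $T_x\mathbf A$ and to reduce both monotonicity conditions to the position of the oriented line $l(x)$ relative to the two vertical directions $v_s(x),v_\varphi(x)$ tangent to $\{s=\mathrm{const}\}$ and $\{\varphi=\mathrm{const}\}$. First I would locate $v_\varphi$ in the coordinates $(s,r)$ with $r=\cos\delta$: from $\varphi=\psi-\delta$, $d\psi=k\,ds$ and $dr=-\sin\delta\,d\delta$ one gets $d\varphi=k\,ds+\frac{1}{\sin\delta}\,dr$, so $v_\varphi$ has slope $\frac{dr}{ds}=-k\sin\delta$ while $v_s$ is vertical; in particular the two foliations are everywhere transverse. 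By the definitions, monotonicity with respect to $\{s=\mathrm{const}\}$ means $l(x)\in\{ds>0\}$ and monotonicity with respect to $\{\varphi=\mathrm{const}\}$ means $l(x)\in\{d\varphi>0\}$. The two conditions agree exactly on the oriented cone $\mathcal C(x)$ bounded by $v_s$ and $v_\varphi$, and disagree on the complementary sector; hence the whole lemma (in both directions) is equivalent to the single statement that an invariant bundle satisfies $l(x)\in\overline{\mathcal C(x)}$ for a.e. $x$.

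Next I would show that $\mathcal C$ is an invariant cone field for $DT$. The image of the $s$-vertical has slope $L_{22}$ at the next point, and the identity $L_{22}+k_1\sin\delta_1=\frac{\sin^2\delta_1}{\ell}>0$ (with $\ell$ the chord length $|\gamma(s)-\gamma(s_1)|$) places $DT(v_s)$ strictly inside $\mathcal C$; the companion computation with the generating function $S$ gives $DT^{-1}(v_\varphi)$ the slope $\sigma-\frac12\rho\sin\delta<\sigma$ in the $(\varphi,p)$-chart, where $\sigma$ is the slope of $v_s$ there, so this image too lies strictly inside $\mathcal C$. The two remaining boundary images, $DT(v_\varphi)$ and $DT^{-1}(v_s)$, fall inside $\mathcal C$ exactly when $\ell>\frac12\rho\sin\delta$, i.e. when the reflected parallel beam has passed its focus before the next reflection; this dispersing inequality holds throughout $\mathcal A$, being the infinitesimal expression of the absence of conjugate points guaranteed by the invariant foliation. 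Granting it, $\mathcal C$ is both forward- and backward-invariant, and strictly so.

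Finally I would invoke the Green-bundle pinching. Assuming $\mathcal L$ monotone with respect to $\{\varphi=\mathrm{const}\}$, it is a graph in the $(\varphi,p)$-chart, so the Green bundles $G^\varphi_\pm$ obtained by pushing and pulling $v_\varphi$ are defined and sandwich $\mathcal L$; by the strict invariance of $\mathcal C$ these bundles lie in the interior of $\mathcal C$, whence $l(x)\in\mathcal C(x)$ is off both boundaries, in particular $l(x)\neq v_s(x)$. Together with the orientation inherited from $\varphi$-monotonicity this is precisely monotonicity with respect to $\{s=\mathrm{const}\}$. The converse is identical after interchanging the two charts, using instead the Green bundles built from $v_s$ and the same cone $\mathcal C$. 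The principal obstacle is the second step: the cone invariance is not formal but rests on the no-conjugate-point property of $\mathcal A$, and one must carry the Green-bundle construction and the sandwiching through for a merely measurable invariant bundle—controlling the defining limits along almost every orbit via the twist monotonicity, rather than relying on smoothness of an invariant curve.
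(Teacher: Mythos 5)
Your opening reduction is essentially right: with the orientations fixed by $ds>0$ and $d\varphi>0$, the lemma amounts to showing that the positive ray of an invariant monotone bundle lies in the \emph{open} sector $\mathcal C(x)=\{ds>0\}\cap\{d\varphi>0\}$ (the closure is not enough, since verticality must be excluded). But the proof breaks at Step 2: the sector $\mathcal C$ is \emph{not} an invariant cone field, under any hypothesis on the table. For a twist map, one of the two boundary rays of $\mathcal C$ is always thrown out of the sector. Take the unit circle: in the chart $(s,\delta)$ the map is $(s,\delta)\mapsto(s+2\delta,\delta)$ and $\varphi=s+\delta$, so the boundary ray $\{d\varphi=0,\ ds>0\}$, i.e. $d\delta=-ds$, is mapped to $(ds_1,d\varphi_1)=(-ds,-2ds)$, which lies in $-\mathcal C$. (Your computation to the contrary rests on a sign slip: with the paper's conventions $\varphi=\psi+\delta$, the slope of $v_\varphi$ in the $(s,\cos\delta)$-chart is $+k\sin\delta$, not $-k\sin\delta$, and the pairs of boundary rays governed by your ``identity'' and by your ``dispersing inequality'' trade places; in either convention one pair always escapes.) There is also a conceptual obstruction: a cone field strictly invariant under $DT$ and $DT^{-1}$ on a set of positive invariant measure forces non-vanishing Lyapunov exponents there (Wojtkowski's cone criterion), whereas the ellipse satisfies every hypothesis of Lemma \ref{bundles} and has zero exponents on all of $\mathcal A$. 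So Step 2 is not merely unproved, it is false, and Step 3 (Green-bundle pinching), which is premised on the strict invariance of $\mathcal C$, falls with it.

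Independently, your justification of the inequality $\ell>\frac12\rho\sin\delta$ is circular. It is not an ``infinitesimal expression of absence of conjugate points''; it is what one obtains from the estimate (\ref{estimate}) written for the generating function $L$, i.e. from monotonicity with respect to $\{s=const\}$ together with invariance---exactly the conclusion to be proved in the direction of the lemma that assumes only $\varphi$-monotonicity. (Note also that the lemma assumes a measurable invariant sub-bundle, not an invariant foliation.) What does work---and is the paper's actual proof---is a one-step, one-sided version of your cone idea that needs no invariance of $\mathcal C$ at all. By (\ref{estimate}) (a consequence of invariance, of $\nu_{\pm1}>0$ from Proposition \ref{positive}, and of the twist), the slope $\omega$ of $\mathcal L$ at $x$ satisfies $\omega>S_{22}(\varphi_{-1},\varphi)$, which is precisely the slope of $DT\bigl(v_\varphi(T^{-1}x)\bigr)$; and by Proposition \ref{derivatives } the slope of $v_s(x)$ in the $(p,\varphi)$-chart equals $S_{22}(\varphi_{-1},\varphi)-S_{12}(\varphi_{-1},\varphi)$. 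Hence the $ds$-component of the guiding vector $(\omega,1)$ is $\frac{1}{\sin\delta}\bigl(\omega-S_{22}+S_{12}\bigr)>0$ by the twist condition alone. One inequality, no cone invariance, no dispersing inequality, no Green bundles; the converse direction is symmetric, with $L$ in place of $S$.
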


Having this lemma we can state 
\begin{theorem}\label{main2}
	Let $\gamma\subset\mathbf R^2$ be a simple closed centrally-symmetric curve of positive curvature.  Let  $\mathcal A\subset \mathbf A$ be the domain bounded by the rotational  invariant curve $\alpha$ of $4$-periodic points and the boundary  $\{\delta=0\}$ of the phase cylinder. Suppose the  restriction of $T$ to $\mathcal A$ has a measurable monotone invariant sub-bundle. Then $\gamma $ is an ellipse.
\end{theorem}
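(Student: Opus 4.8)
The plan is to establish a Hopf-type rigidity inequality using the integral-geometry method, exploiting the non-standard generating function $S$ and the central symmetry of $\gamma$. First I would fix a measurable monotone invariant sub-bundle $\mathcal L$ on $\mathcal A$; by Lemma \ref{bundles} I may work with whichever of the two vertical foliations is convenient, and the natural choice here is $\{\varphi=\mathrm{const}\}$, because the generating function $S(\varphi,\varphi_1)=2h(\psi)\sin\delta$ is expressed directly in the variables $(p,\varphi)$ and carries all the geometric information about $\gamma$ through its support function $h$. The slope of the invariant line $l(x)$ in these coordinates can be encoded by a single scalar function on $\mathcal A$, and the $T$-invariance together with monotonicity will force this scalar to satisfy a first-order relation along orbits. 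The twist positivity $S_{12}=\tfrac12\rho\sin\delta>0$ guarantees that the relevant quadratic form is sign-definite, which is what makes an inequality available at all.

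Next I would integrate this pointwise relation over $\mathcal A$ against a carefully chosen weight, following the integral-geometric scheme of \cite{B0}, \cite{B1}, but now adapted to the generating function $S$ rather than to the chord length $L$. The key technical move, flagged in the introduction, is the choice of weights: integrating the Hopf-type inequality for the invariant bundle, using invariance of the measure and integration by parts (Green/Stokes on the cylinder), should collapse the boundary terms and leave a single integral inequality purely in terms of $h$ and its derivatives over the boundary circle $S^1$. Central symmetry enters decisively here: it forces $h(\psi+\pi)=h(\psi)$, so $h$ is a $\pi$-periodic function, its Fourier expansion contains only even harmonics, and the curve of $4$-periodic orbits (rotation number $1/4$) provides the geometric closing relation that pins down the relevant caustic/boundary data. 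I would use the structure of the $4$-periodic invariant curve $\alpha$ to control the inner boundary contribution of $\mathcal A$ at $\delta$ corresponding to rotation number $1/4$.

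The heart of the argument is then to show that the resulting integral inequality over $S^1$ reduces to the Wirtinger inequality $\int (h')^2 \ge \int h^2$ (on the appropriate mean-zero, $\pi$-periodic class), with equality characterizing precisely the second harmonics. Because $h$ is $\pi$-periodic, its Fourier modes start at frequency $2$, so Wirtinger's constant is attained exactly when $h$ is a pure combination of $\cos 2\psi$ and $\sin 2\psi$ added to a constant; and a support function of the form $h(\psi)=a+b\cos 2\psi+c\sin 2\psi$ is exactly the support function of an ellipse centered at the origin. Thus equality in the reduced inequality forces $\gamma$ to be an ellipse, and I would show that the existence of the invariant sub-bundle forces equality rather than strict inequality — this is the rigidity step, where the monotone invariant foliation saturates the bound.

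The main obstacle, and the most delicate part, will be the reduction of the complicated weighted integral inequality to the clean Wirtinger form. As the authors themselves remark, the integral inequalities produced by this weighted integral-geometry construction are far more intricate than in the circular case, and it is not a priori transparent why they should simplify to Wirtinger; the simplification appears to be a small miracle that the authors verified numerically before proving. I expect the bulk of the genuine work to lie in selecting the correct weight function so that, after integration by parts and use of the $4$-periodic structure and central symmetry, all the cross terms and curvature-dependent factors organize themselves into $\int_{S^1}\bigl((h')^2-h^2\bigr)\,d\psi$ up to a nonnegative remainder; verifying the cancellations is the computational core, carried out in Subsection \ref{reduction}.
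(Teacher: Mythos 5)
Your overall scheme (monotone bundle $\Rightarrow$ pointwise Hopf-type relation via the generating function $S$, weighted integration over $\mathcal A$ against the invariant measure $S_{12}\,d\varphi\,d\varphi_1$, reduction to Wirtinger, rigidity from the equality case) matches the paper's architecture, and your sign structure (the bundle forces one inequality, Wirtinger the opposite one) is right. But the reduction target you name is wrong, and the error is fatal rather than cosmetic. You claim the inequality collapses to Wirtinger for the support function $h$ itself, with equality forcing $h(\psi)=a+b\cos 2\psi+c\sin 2\psi$, and that such an $h$ ``is exactly the support function of an ellipse centered at the origin.'' It is not: the support function of a centered ellipse is $h(\psi)=\sqrt{a^2\cos^2\psi+b^2\sin^2\psi}$, which is not a trigonometric polynomial; it is $h^2$, not $h$, whose Fourier expansion is a constant plus a pure second harmonic. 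A curve with $h=1+\varepsilon\cos 2\psi$ (small $\varepsilon$, so $h+h''=1-3\varepsilon\cos 2\psi>0$) is a smooth convex centrally-symmetric curve that is not an ellipse, so an argument terminating in your equality case would ``characterize'' a class containing non-ellipses and could not prove the theorem.

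The missing ingredient is precisely Theorem \ref{4-periodic}(D) / Corollary \ref{relations1}, which your sketch invokes only as vague ``boundary control'' but never feeds into the computation: the invariant curve $\alpha=\{\delta=d(\psi)\}$ of $4$-periodic orbits forces $h(\psi)=R\sin d(\psi)$, $h\left(\psi+\frac{\pi}{2}\right)=R\cos d(\psi)$, $h^2(\psi)+h^2\left(\psi+\frac{\pi}{2}\right)=R^2$. In the paper, the inequality obtained from the bundle (with the specific weights $p^2$, $p_1^2$, chosen so that ellipses give equality) is inequality (\ref{krokodil1}); one then substitutes the relations (\ref{relations}) so that the whole functional depends on $d$ alone, symmetrizes via $\psi\mapsto\psi+\frac{\pi}{2}$, integrates by parts repeatedly, and changes the unknown to $\mu=\cos(2d)$, after which the integrand miraculously collapses to $\frac{\pi R^4}{512}\bigl((\mu'')^2-4(\mu')^2\bigr)$. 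Wirtinger is applied to the $\pi$-periodic, automatically mean-zero function $\mu'$ (not to $h$, which has no reason to have zero mean), giving $\int_0^{\pi}\bigl((\mu'')^2-4(\mu')^2\bigr)d\psi\geq 0$, opposite in sign to (\ref{U}); equality then yields $\cos 2d(\psi)=A\cos(2\psi+\alpha)+c$, one must still prove $c=0$ (the paper does this by noting $R^2\cos 2d=h^2\left(\psi+\frac{\pi}{2}\right)-h^2(\psi)$ has zero mean over a period), and only then does $h^2=R^2\sin^2 d=\frac{R^2}{2}(1-A\cos 2\psi)$ exhibit $\gamma$ as an ellipse. Without routing the argument through $d$ and $\mu=\cos 2d$ --- that is, without the parallelogram structure of $\alpha$ entering the integrand itself and not merely the domain of integration --- there is no mechanism by which the weighted integral reduces to an inequality whose equality case picks out ellipses rather than curves with $h$ equal to a second-order trigonometric polynomial.
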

Let us explain two ways of proving Theorem \ref{main1} using Theorem \ref{main2}. 
In fact, it can be concluded  from \cite{Arnaud1} that the existence of a monotone sub-bundle invariant under $T$ is equivalent to the continuous foliation property of Theorem \ref{main1}, called in \cite{Arnaud1} $C^0$-integrability. However, we will not use this equivalence and rather give a simpler arguments originally used in \cite{B0},  \cite{B1} and \cite{W}.

The first way of reduction (see \cite{W}) is based on the Birkhoff's theorem stating that each invariant rotational curve of $T$ is a graph of a Lipschitz function. Notice that the graph property holds with respect to both vertical foliations $\{s=const\}$ and $\{\varphi=const\}$. By the Lipschitz property we can define the line bundle of upper tangent lines to the graphs (corresponding to upper derivative). This line bundle is obviously measurable, monotone and invariant under $T$.
 Hence, Theorem \ref{main2} can be applied.
 Notice that if the foliation is assumed to be $C^1$-smooth, then $ \mathcal L$ is just the bundle of tangent lines to the  leaves and is continuous.

Another way of reduction (see \cite{B0}) is based on the Aubry-Mather theory for twist maps.
It follows from this theory (see the survey paper \cite{bangert}) that each orbit coming from an invariant graph is necessarily maximizing and in particular has no conjugate points. 
Moreover (implementing for billiards an idea of E. Hopf) one can construct \cite{B0}  in a measurable way, a non-vanishing Jacobi field along every billiard configuration and thus obtain a measurable, monotone invariant sub-bundle. This procedure of reduction is especially useful in establishing the following geometric fact.
\begin{corollary}
	Suppose  $\gamma$ is a centrally-symmetric convex closed curve of positive curvature. Assume there exists an  invariant curve $\alpha$ of rotation number $1/4$ that consists of $4$-periodic orbits.
	If $\gamma$ is not an ellipse, there always exist a point $x\in\mathcal A$ and a vertical tangent vector $v\in T_x\mathcal A$ such that for some positive integer $n$, the vector $DT^n(v)$ is again vertical (this  exactly means that the points $x$ and $T^n x$ are conjugate). Moreover, vertical vector in this statement can be understood with respect to each of the vertical foliations $\{s=const\}$ or $\{\varphi=const\}$.
	
	In particular, choosing the vertical foliation $\{\varphi=const\}$ we conclude that one can find a beam of parallel lines such that after $n$ reflections the beam becomes parallel (infinitesimally) again.
\end{corollary}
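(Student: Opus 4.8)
The plan is to prove the contrapositive: assuming that no pair of points $x, T^n x$ with $x\in\mathcal A$ is conjugate, I will produce a measurable monotone invariant sub-bundle on $\mathcal A$ and then invoke Theorem \ref{main2} to conclude that $\gamma$ is an ellipse. The first observation is that $\mathcal A$ is $T$-invariant: it is bounded by the invariant curve $\alpha$ and by the boundary component $\{\delta=0\}$ of the phase cylinder, so every orbit issued from a point of $\mathcal A$ remains in $\mathcal A$ for all times, and the whole infinite-orbit construction below is legitimate.

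The heart of the argument is the Hopf-type construction of Jacobi fields from \cite{B0}, \cite{B1}. Fix $x\in\mathcal A$ and work, say, with the foliation $\{\varphi=\mathrm{const}\}$, so that ``vertical'' means tangent to a fibre $\{\varphi=\mathrm{const}\}$. Along the orbit $(x_n)_{n\in\mathbf Z}$, $x_n=T^n x$, the linearized dynamics $DT$ together with the twist condition $S_{12}=\tfrac12\rho\sin\delta>0$ determines an invariant cone field and lets me track the image of the vertical direction by a Ricatti-type slope recursion. For each $N>0$ let $J_N$ be the Jacobi field which is vertical at time $-N$; the assumption that no two points of the orbit are conjugate guarantees that $J_N$ stays non-vertical at every later time, and the twist inequality forces the corresponding slopes at time $0$ to form a monotone sequence as $N\to\infty$, bounded by the slopes of the analogous fields made vertical in the future. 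Its limit defines a non-vertical line $G^-(x)\subset T_x\mathcal A$, the negative Green bundle; by construction $DT\,G^-(x)=G^-(Tx)$, the line $G^-(x)$ depends measurably on $x$, and the orientation $d\varphi>0$ makes it monotone with respect to $\{\varphi=\mathrm{const}\}$.

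Thus $\mathcal L:=G^-$ is a measurable monotone $T$-invariant sub-bundle on $\mathcal A$, and Theorem \ref{main2} applies to give that $\gamma$ is an ellipse, contradicting the hypothesis that $\gamma$ is not an ellipse and therefore proving the existence of conjugate points. The statement that conjugate points may be understood with respect to either vertical foliation follows from Lemma \ref{bundles}: absence of conjugate points with respect to one foliation yields a Green sub-bundle monotone with respect to that foliation, hence by the Lemma monotone with respect to the other, which in turn rules out conjugate points with respect to the other foliation. I expect the main obstacle to be the analytic core of the Green-bundle construction, namely proving that the slope sequences defined by the Ricatti recursion are genuinely monotone and bounded, so that the limit exists and is non-vertical, and that the resulting line field is measurable. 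This is precisely where both the twist inequality and the no-conjugate-points hypothesis enter in an essential and quantitative way, and it is the step that must be imported carefully from \cite{B0}, \cite{B1} rather than merely cited.
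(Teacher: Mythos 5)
Your proposal is correct and follows essentially the same route as the paper: the authors also obtain this corollary by contraposition, citing the Hopf-type measurable Jacobi-field (Green bundle) construction of \cite{B0} to turn absence of conjugate points into a measurable monotone invariant sub-bundle, then applying Theorem \ref{main2}, with Lemma \ref{bundles} handling the equivalence of the two vertical foliations. The only caveat is the one you already flag yourself: the monotonicity/boundedness of the slope sequences is imported from \cite{B0}, \cite{B1} rather than reproved, which matches the level of detail in the paper.
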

\subsection{Functions $\omega,\nu_1, \nu_{-1}$ and their properties}
In order to prove Lemma \ref{bundles} and Theorem \ref{main2} we shall apply the following general arguments. 

Consider a twist map $T$ of the cylinder 
\begin{equation}\label{twisth}
T:(p,q)\mapsto (p_1,q_1),\ 
\begin{cases}
p_1=H_2(q,q_1)\\
p=-H_1(q,q_1)
\end{cases}
\end{equation}
given by a generating function $H$ (can be one of the two discussed above) with the twist condition $H_{12}>0$. 

Let $\mathcal L$ be a measurable sub-bundle, monotone with respect to the vertical foliation $\{q=const\}$ and  invariant under $T$.
Lines of $\mathcal{L}$ are non-vertical and oriented by $dq>0$ (to the right).
Define the guiding vector field $u$ of $\mathcal{L}$ as follows.
Take a point $M=(p,q)$ and fix a positively oriented vector $$u(M)=\frac{\partial}{\partial q}+\omega(M)\frac{\partial}{\partial p}$$ on the line $l(M)$, where $\omega(M)$ is the slope of  $l(M)$. Let $$M_1=(p_1,q_1)=T(M),\quad M_{-1}=(p_{-1},q_{-1})=T^{-1}(M).$$
Then we can define two  functions
$$
\nu_1(M):=\pi_* DT(u(M))\quad and\quad  \nu_{-1}(M):=\pi_* DT^{-1}(u(M)),
$$where $\pi_*$ is the projection to the $q$ component (see Fig. \ref{fig:linebundle}).

 	\begin{figure}[h]
 	\centering
 	\includegraphics[width=0.8\textwidth]{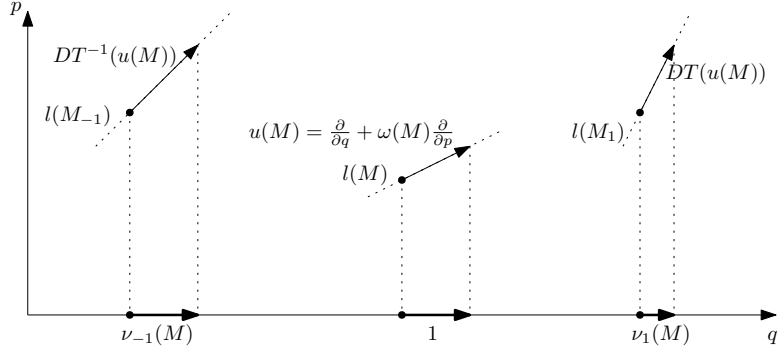}
 	\caption{Monotone invariant sub-bundle; oriented line $l(M)$ defines functions $\omega(M),\nu_1(M),\nu_{-1}(M). $}
 	\label{fig:linebundle}
 \end{figure}
  Moreover, we have 
  \begin{proposition}\label{positive}The following properties hold:
  	\begin{enumerate}[(a)]
  	\item $\omega, \nu_1 $, $\nu_{-1}$ are measurable functions,
  	\item $\nu_1, \nu_{-1}>0,$
  	\item $\nu_{-1}(T(M))=\nu_1(M)^{-1}.$
\end{enumerate}
  \end{proposition}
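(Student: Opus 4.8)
The plan is to make everything explicit through the generating function $H$ and the twist inequality $H_{12}>0$, and then read off the three assertions. Write $M=(p,q)$, $M_1=T M=(p_1,q_1)$, $M_{-1}=T^{-1}M=(p_{-1},q_{-1})$, and recall $u(M)=\partial_q+\omega(M)\,\partial_p$. Differentiating the defining relations $p=-H_1(q,q_1)$ and $p_1=H_2(q,q_1)$ along the vector $u(M)$ (so $dq=1$, $dp=\omega$) and projecting onto the $q_1$-component, I get
\begin{equation*}
\nu_1(M)=-\frac{\omega(M)+H_{11}(q,q_1)}{H_{12}(q,q_1)} .
\end{equation*}
Differentiating instead $p=H_2(q_{-1},q)$ gives the backward counterpart
\begin{equation*}
\nu_{-1}(M)=\frac{\omega(M)-H_{22}(q_{-1},q)}{H_{12}(q_{-1},q)} .
\end{equation*}
These two formulas are the backbone of all three parts.

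Part (a) is then immediate: $\omega$ is measurable because $\mathcal L$ is a measurable sub-bundle and the slope of a line depends measurably on the line; since $T$ is smooth, the quantities $H_{11},H_{12},H_{22}$ evaluated along the orbit of $M$ are continuous in $M$, so the displayed quotients exhibit $\nu_1$ and $\nu_{-1}$ as measurable functions.

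Part (c) I would prove with no computation at all, using only invariance. Because $\mathcal L$ is $T$-invariant, $DT\,u(M)$ lies on the line $l(M_1)$ and is therefore proportional to its guiding vector $u(M_1)$; comparing $q_1$-components (where $u(M_1)$ has component $1$) gives $DT\,u(M)=\nu_1(M)\,u(M_1)$. Applying $DT^{-1}$, using linearity and $\pi_*u(M)=1$, I obtain
\begin{equation*}
\nu_{-1}(M_1)=\pi_*DT^{-1}\bigl(u(M_1)\bigr)=\nu_1(M)^{-1}\,\pi_*\bigl(u(M)\bigr)=\nu_1(M)^{-1},
\end{equation*}
which is exactly (c).

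Part (b) is the substantive one and I expect the sign to be the main obstacle. From the formulas and $H_{12}>0$ one has $\nu_1(M)>0$ precisely when $\omega(M)<-H_{11}(q,q_1)=:\omega_+(M)$, and $\nu_{-1}(M)>0$ precisely when $\omega(M)>H_{22}(q_{-1},q)=:\omega_-(M)$; thus positivity is equivalent to trapping the slope strictly inside the cone $\omega_-(M)<\omega(M)<\omega_+(M)$. The barriers have a transparent meaning: $\omega=\omega_+(M)$ is exactly the slope whose forward image $DT\,l(M)$ is vertical, and $\omega=\omega_-(M)$ the slope whose backward image is vertical. Since $\mathcal L$ is invariant and all its lines are non-vertical, neither $l(M_1)$ nor $l(M_{-1})$ is vertical, so $\omega(M)\ne\omega_\pm(M)$ and both $\nu_{\pm1}$ are nowhere zero. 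It remains to fix the sign, i.e. to place $\omega$ between the barriers rather than outside; here I would use the twist structure through the induced Riccati recursion
\begin{equation*}
\omega(M_1)=H_{22}(q,q_1)-\frac{H_{12}(q,q_1)^2}{\omega(M)+H_{11}(q,q_1)} ,
\end{equation*}
which shows that the condition $\omega(M)<\omega_+(M)$ is equivalent to $\omega(M_1)>\omega_-(M_1)$ and propagates along the orbit, so that the cone is invariant under all iterates. Because $\mathcal L$ is globally $T$-invariant and stays non-vertical (equivalently, the underlying configurations have no conjugate points), the slope cannot escape this cone, giving $\omega_-<\omega<\omega_+$ and hence $\nu_1,\nu_{-1}>0$. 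Making this trapping rigorous from measurability and global invariance alone --- rather than from continuity of $\mathcal L$ --- is the step I expect to demand the most care.
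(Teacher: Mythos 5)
Your parts (a) and (c) are correct and coincide with the paper's own argument; the problem is part (b), which you rightly single out as the substantive point but do not actually prove.

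Concretely, your Riccati argument is circular. The identity $\omega(M_1)=H_{22}(q,q_1)-H_{12}^2(q,q_1)/\bigl(\omega(M)+H_{11}(q,q_1)\bigr)$ shows only that the half-condition $\omega(M)<\omega_+(M)$ is \emph{equivalent} to the half-condition $\omega(M_1)>\omega_-(M_1)$; it transports the cone condition along the orbit \emph{if} it holds somewhere, but nothing you wrote establishes that it holds anywhere, and the closing sentence (``the slope cannot escape this cone'') is an assertion, not an argument. Worse, this route cannot be repaired: once the orientation of the lines is forgotten, statement (b) is simply false. Take the positive-twist shear $T(p,q)=(p,q-p)$ (generating function $H=-\tfrac12(q_1-q)^2$, so $H_{12}=1$), and along the orbit of a single point $M$ define the line field with slope $\omega_n=1/(\pi-n)$ at $T^nM$. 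This solves the slope recursion $\omega\mapsto\omega/(1-\omega)$, is never vertical, and is invariant as a field of \emph{unoriented} lines, yet $\nu_1=1-\omega_n$ is negative at $n=3$. Extending by the horizontal field $\omega\equiv 0$ off this orbit gives a measurable, non-vertical, $T$-invariant (unoriented) bundle for which (b) fails. So measurability, non-verticality, unoriented invariance and the twist condition together do not imply (b); note also that absence of conjugate points does not help, since the shear has none.

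The missing idea is that the orientation is part of the hypotheses, and it is exactly what the paper's one-line proof of (b) uses. In the paper, $\mathcal L$ is a bundle of \emph{oriented} lines; monotonicity says that each line is non-vertical and that its orientation is the one with $dq>0$, so $u(M)$ is the positively oriented guiding vector; and invariance means that $DT$ maps the oriented line $l(M)$ onto the oriented line $l(T(M))$, preserving the orientation. Hence $DT\,u(M)$ is a \emph{positive} multiple of $u(T(M))$, and $\nu_1(M)=\pi_*DT\,u(M)>0$ immediately; likewise for $\nu_{-1}$ with $DT^{-1}$. You yourself write $DT\,u(M)=\nu_1(M)\,u(M_1)$ in your proof of (c): part (b) is precisely the statement that this proportionality factor is positive, and that positivity is the orientation-preservation built into the definition of an invariant monotone (oriented) sub-bundle --- it is a piece of the data, not a consequence of the twist condition.
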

\begin{proof}Item (a) holds, since $\mathcal{L}$ is a measurable sub-bundle. Item (b), holds
	since the sub-bundle $\mathcal L$ is monotone and invariant (see Fig. \ref{fig:linebundle}). 
	In order to prove (c) notice that the vector field $u$, defined above, satisfies the equations (see Fig. \ref{fig:linebundle}):
\begin{equation}\label{eq:nu}
	 {DT(u(M))=\nu_{1}(M)u(T(M))};\quad
{DT^{-1}(u(M))=\nu_{-1}(M)u(T^{-1}(M))}.
\end{equation}
Now replacing $M$ by $T(M)$ in the second relation of (\ref{eq:nu}) we get:
$$
DT^{-1}(u(T(M)))=\nu_{-1}(T(M))u(M).
$$
Applying $DT$ to both sides of this equation we have:
$$
u(T(M))=\nu_{-1}(T(M))DT(u(M)).
$$
Substituting here $DT(u(M))$ from the first identity of (\ref{eq:nu}) we have:
$$
u(T(M))=\nu_{-1}(T(M))\nu_{1}(M)u(T(M)).
$$
Thus,
$$
\nu_{-1}(T(M))\nu_{1}(M)=1,
$$completing the proof.
\end{proof}

Now we want to relate the functions $\omega$, and $ \nu_1$ in the following way.                
Differentiating (\ref{twisth}), we get for the differential $DT$ the relations
\begin{equation}\label{diff1}
\begin{cases}
dp_1=H_{12}(q,q_1)dq+H_{22}(q,q_1)dq_1\\
dp=-H_{11}(q,q_1)dq-H_{12}(q,q_1)dq_1.
\end{cases}
\end{equation}
And similarly
\begin{equation}\label{diff2}
\begin{cases}
dp=H_{12}(q_{-1},q)dq_{-1}+H_{22}(q_{-1},q)dq\\
dp_{-1}=-H_{11}(q_{-1},q)dq_{-1}-H_{12}(q_{-1},q)dq.
\end{cases}
\end{equation}
We evaluate the first relation of (\ref{diff2}) and the second relation of (\ref{diff1}) on the vector $u(p,q)=\frac{\partial}{\partial q}+\omega(p,q)\frac{\partial}{\partial p}$, and using the definitions of $\nu_1,\nu_{-1}$,
 we substitute there $$dq_{-1}=\nu_{-1}(p,q), \quad dq_1=\nu_1(p,q), \quad dp=\omega(p,q),\quad dq=1.$$ 

Thus, we can express $\omega (M)$ in two ways:
\begin{equation}\label{omega}
\begin{cases}
\omega(p,q)=H_{22}(q_{-1}, q)+H_{12}(q_{-1},q)\nu_{-1}(M),\\
\omega(p,q)=-H_{11}(q, q_1)-H_{12}(q,q_1)\nu_{1}(M).
\end{cases}
\end{equation}

In particular, it follows from (\ref{omega}), Proposition \ref{positive}(b),  and the twist condition $H_{12}>0$ that
\begin{equation}\label{estimate}
H_{22}(q_{-1},q)<\omega(p,q)<-H_{11}(q,q_1).
\end{equation} 
Shifting from  $M$ to $ T(M)$ in the first equation of (\ref{omega}) and using Proposition \ref{positive}(c) we get the relations:
\begin{equation}\label{omega1}
\begin{cases}
\omega(T(p,q))=H_{22}(q,q_1)+H_{12}(q,q_1)\nu_1^{-1}(M),\\
\omega(p,q)=-H_{11}(q, q_1)-H_{12}(q,q_1)\nu_{1}(M).
\end{cases}
\end{equation}
\subsection{Derivatives of generating function $S$}
Let us return now to the billiard ball map $T$. The derivatives of generating function $S$ can be immediately computed: 
\begin{proposition}\label{derivatives }
	The  second partial derivatives of $S $ are:
	\begin{align*}
	&
	S_{11}(\varphi,\varphi_1)=\frac{1}{2}(h''(\psi)-h(\psi))\sin \delta -h'(\psi)\cos \delta;&\\
&S_{22}(\varphi,\varphi_1)=\frac{1}{2}(h''(\psi)-h(\psi))\sin \delta +h'(\psi)\cos \delta;&\\
	&S_{12}(\varphi,\varphi_1)=\frac{1}{2}(h''(\psi)+h(\psi))\sin \delta ,&
	\end{align*}
	where $\psi:=\frac{\varphi_1+\varphi}{2},\quad \delta:=\frac{\varphi_1-\varphi}{2}.$
	\end{proposition}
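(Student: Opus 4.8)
The plan is a direct application of the chain rule to the generating function $S(\varphi,\varphi_1)=2h(\psi)\sin\delta$, exploiting the fact that the intermediate variables $\psi=\frac{\varphi_1+\varphi}{2}$ and $\delta=\frac{\varphi_1-\varphi}{2}$ are affine in $(\varphi,\varphi_1)$, so that their partial derivatives are the constants
\[
\frac{\partial\psi}{\partial\varphi}=\frac{\partial\psi}{\partial\varphi_1}=\frac12,\qquad
\frac{\partial\delta}{\partial\varphi}=-\frac12,\qquad
\frac{\partial\delta}{\partial\varphi_1}=\frac12 .
\]
First I would differentiate once to obtain
\[
S_1=h'(\psi)\sin\delta-h(\psi)\cos\delta,\qquad
S_2=h'(\psi)\sin\delta+h(\psi)\cos\delta .
\]
These agree with the formulas $p=-S_1$ and $p_1=S_2$ already recorded in (\ref{generating}), which serves as a convenient consistency check on the sign bookkeeping before I proceed.

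Next I would differentiate each of $S_1$ and $S_2$ a second time. Differentiating $S_1$ with respect to $\varphi$ and collecting the $\sin\delta$ and $\cos\delta$ contributions yields the claimed expression for $S_{11}$, and differentiating $S_2$ with respect to $\varphi_1$ gives $S_{22}$. For the cross-derivative one differentiates $S_1$ with respect to $\varphi_1$: there the two $h'(\psi)\cos\delta$ terms now appear with opposite signs and cancel, leaving $S_{12}=\frac12(h''+h)\sin\delta$. Since $\gamma$ is $C^2$, its support function $h$ is $C^2$, so the mixed partials commute and I obtain the same value by differentiating $S_2$ with respect to $\varphi$; this is a second useful check.

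There is no genuine obstacle here: the statement is an elementary computation, and the only point demanding care is the sign tracking in the chain rule, chiefly the minus sign in $\partial\delta/\partial\varphi$ combined with the derivatives of $\sin$ and $\cos$. The conceptually relevant outcome is simply that $S_{12}=\frac12(h''+h)\sin\delta=\frac12\rho\sin\delta$ is strictly positive on the interior of the phase cylinder, where $\sin\delta>0$ and the radius of curvature $\rho=h''+h>0$; this is precisely the twist condition $S_{12}>0$ invoked in the preceding section.
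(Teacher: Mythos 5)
Your computation is correct and is exactly what the paper intends: the paper states Proposition \ref{derivatives } without proof, remarking only that the derivatives "can be immediately computed," and your chain-rule calculation (with the sign check against (\ref{generating}) and the cancellation of the $h'(\psi)\cos\delta$ terms in $S_{12}$) is that immediate computation. Nothing further is needed.
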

In the sequel we shall work with the coordinates $(p,\varphi)$, the vertical foliation $\{\varphi=const\}$ and the function $\omega$
constructed above.
\subsection{Proof of Lemma \ref{bundles}}
 In one direction, let us assume that $\mathcal L$ is a monotone line bundle invariant under $T$. 
 Consider an oriented line incoming the billiard table at the point $\gamma(s)$ with the angle $\delta$.
 Let $\varphi$ is the angle between the right unit normal to the line and the horizontal, and $p$ is the signed distance from the origin to the line. 
The change of variables $(\cos\delta,s)_{\overrightarrow{F}} (p,\varphi )$ is given by the formulas
(exactly as $p_1$ in the second formula of (\ref{generating}))
$$
p=h(\psi)\cos\delta+h'(\psi)\sin\delta,\quad \varphi=\psi+\delta,
$$ where $\psi=\int k(s) ds$. 
Then the matrix of the differential  equals:

$$DF=
\begin{pmatrix}
	(	h(\psi)\sin\delta-h'(\psi)\cos\delta)\frac{1}{\sin\delta}& k(s)(	h'(\psi)\cos\delta+h''(\psi)\sin\delta) \\
-\frac{1}{\sin\delta}&k(s) 
\end{pmatrix}.
$$

The determinant of $DF$ is equal to $1$ (as it should be for a symplectic change of variables). The directing vector of the line $l(p,\varphi)$ of $\mathcal L$ in coordinates  $(p,\varphi ) $ has the components $(\omega,1)$ (as above, we denote $\omega$ the slope of the line in the coordinates $(p,\varphi)$).
We need to express this vector in the coordinates $(\cos\delta,s)$. Denote its components by $(a,b)$. 
$$
\begin{pmatrix}
a\\
b
\end{pmatrix}
=DF^{-1}\begin{pmatrix}
\omega\\
1
\end{pmatrix}.
$$
Then, using Proposition \ref{derivatives } we have

$$
b=\frac{1}{\sin\delta}(\omega+h(\psi)\sin\delta-h'(\psi)\cos\delta)=
$$
$$
=\frac{1}{\sin\delta}(\omega-S_{22}+S_{12})>0,
$$ 
by the twist condition and inequality (\ref{estimate}).
Therefore, the sub-bundle $\mathcal L$ is monotone also with respect to the vertical foliation $\{s=const\}$. The proof in the other direction uses the derivatives of the generating function $L$ (see for example \cite{B0}) and is completely analogous. $\Box$

\section{\bf Invariant curve of $4$-periodic orbits}
It is  well-known (see \cite{Connes-Z} for several proofs) that all Poncelet 4-gons for an ellipse
are  parallelograms.  Inspired by a proof of this fact by Shvo Regavim and by the recent paper \cite{BT} we can generalize this result from the case of an  ellipse  
to any centrally-symmetric billiard table as follows.

\begin{theorem}\label{4-periodic}
	Let $\gamma$ be a centrally-symmetric billiard table. Assume billiard ball map $T:\mathbf A \rightarrow\mathbf A$ has a continuous rotational invariant curve $\alpha=\{\delta=d(\psi)\}$ of rotation number $\frac{1}{4}$ consisting of $4$-periodic orbits of $T$. Then the following properties hold:
	
	 \begin{enumerate} [\rm (A)]
	 	\item Function $d(\psi)$ is  $\pi$-periodic and each billiard quadrilateral corresponding to invariant curve $\alpha$ is a parallelogram.
	 	
	 	\item The tangent lines to $\gamma$ at the vertices of the parallelogram form a rectangle.
	 
	 	\item  $0<d(\psi)<\pi/2,\quad d\left(\psi+\frac{\pi}{2}\right)=\frac{\pi}{2}-d(\psi).$
	 	
	 	\item The functions $d$ and $h$ satisfy the identities  $$\tan d(\psi)=\frac{h(\psi)}{h\left(\psi+\frac{\pi}{2}\right)}=-\frac{h'\left(\psi+\frac{\pi}{2}\right)}{h'(\psi)}, \ $$
	 	and $$h^2(\psi)+h^2\left(\psi+\frac{\pi}{2}\right)=R^2=const.$$
	 \end{enumerate}
  \end{theorem}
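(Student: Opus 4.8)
My plan is to deduce all four assertions from a single dynamical fact: on $\alpha$ the second iterate of the billiard map equals the central symmetry. \emph{Step 1 (symmetries).} Reflection in the center $O$ acts on the phase cylinder, in the coordinates $(p,\varphi)$, as $\sigma\colon(p,\varphi)\mapsto(p,\varphi+\pi)$; it preserves $\gamma$ and commutes with $T$. Comparing the two supporting lines with opposite outer normals gives $h(\psi+\pi)=h(\psi)$, so $h$ and $h'$ are $\pi$-periodic. I would then show $\sigma(\alpha)=\alpha$: the curve $\sigma(\alpha)$ is again a continuous rotational invariant curve of rotation number $1/4$, and its orbits are minimal in the Aubry--Mather sense (the central symmetry is a symmetry of the variational problem, since the $\pi$-periodicity of $h$ makes the generating function $S$ invariant). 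Because the minimal orbits of a fixed rotation number lie on a single Lipschitz graph over the base (see \cite{bangert}), and $\alpha$ already projects onto the whole base circle, $\sigma(\alpha)$ cannot differ from $\alpha$. Equivalently, $d$ is $\pi$-periodic, which is half of (A).

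\emph{Step 2 (the parallelogram --- the crux).} On the circle $\alpha$ both $T^2$ and $\sigma$ are orientation-preserving homeomorphisms of rotation number $1/2$ (as $T$ has rotation number $1/4$ and $\sigma$ is the half-shift $\varphi\mapsto\varphi+\pi$), and they commute. Set $g:=\sigma\circ T^2$. From $\sigma T=T\sigma$, $\sigma^2=\mathrm{id}$ and $T^4=\mathrm{id}$ on $\alpha$ one gets $g^2=\mathrm{id}$, while additivity of the rotation number on commuting circle maps gives $\mathrm{rot}(g)=0$. An orientation-preserving involution of the circle with rotation number $0$ has a fixed point and is therefore the identity; hence $T^2=\sigma$ on $\alpha$. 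For a $4$-periodic orbit with chords $\ell_0,\dots,\ell_3$ this says $\ell_{j+2}=\sigma\ell_j$, i.e. $\varphi_{j+2}=\varphi_j+\pi$ and $p_{j+2}=p_j$; intersecting consecutive chords yields $V_{j+2}=-V_j$, so the quadrilateral is a parallelogram centered at $O$. This step is where I expect the real difficulty to lie: it is exactly the parallelogram property, and its subtle ingredient is the minimality/graph argument of Step 1 that legitimizes treating $\sigma$ and $T^2$ as two commuting square roots of the identity on one and the same curve.

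\emph{Step 3 ((B) and (C)).} Using $\psi_j=\tfrac12(\varphi_{j-1}+\varphi_j)$ together with $\varphi_{j+2}=\varphi_j+\pi$ gives $\psi_{j+1}-\psi_j=\tfrac\pi2$: the outer normals at consecutive vertices are orthogonal and opposite ones antiparallel, so the four tangent lines bound a rectangle, which is (B). Since the normal angle advances by $\delta_j+\delta_{j+1}$ between consecutive reflections, orthogonality forces $\delta_j+\delta_{j+1}=\tfrac\pi2$, i.e. $d(\psi)+d(\psi+\tfrac\pi2)=\tfrac\pi2$; positivity of the incidence angles then gives $0<d<\tfrac\pi2$, which is (C).

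\emph{Step 4 ((D)).} Finally I would read (D) off the non-standard generating function. By (\ref{generating}) the chord leaving a vertex with data $(\psi,\delta)$ has signed distance $p=h(\psi)\cos\delta+h'(\psi)\sin\delta$, whereas the same chord reaching the next vertex, whose data are $(\psi+\tfrac\pi2,\tfrac\pi2-\delta)$, has $p=h(\psi+\tfrac\pi2)\sin\delta-h'(\psi+\tfrac\pi2)\cos\delta$. Equating these, and equating the analogous pair at $\psi+\tfrac\pi2$ (using $\pi$-periodicity of $h$), leaves a compatibility condition that collapses to $h(\psi)h'(\psi)+h(\psi+\tfrac\pi2)h'(\psi+\tfrac\pi2)=0$; this is $\frac{d}{d\psi}\big(h^2(\psi)+h^2(\psi+\tfrac\pi2)\big)=0$, i.e. $h^2(\psi)+h^2(\psi+\tfrac\pi2)=R^2$. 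Cross-multiplying the first relation with $h(\psi)h'(\psi)+h(\psi+\tfrac\pi2)h'(\psi+\tfrac\pi2)=0$ then gives $\tan d(\psi)=h(\psi)/h(\psi+\tfrac\pi2)=-h'(\psi+\tfrac\pi2)/h'(\psi)$, completing (D). These last manipulations are routine, so the whole weight of the theorem rests on the identity $T^2=\sigma$ on $\alpha$ established in Step 2.
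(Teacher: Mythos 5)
Your proposal is correct and follows essentially the same route as the paper's proof: the central symmetry combined with the Aubry--Mather uniqueness of rotational invariant curves consisting of periodic orbits gives $\sigma(\alpha)=\alpha$ (i.e.\ $\pi$-periodicity of $d$), the identity $T^2=\sigma$ on $\alpha$ then yields the parallelogram property and the quarter-turn $\psi\mapsto\psi+\frac{\pi}{2}$, and the two-way evaluation of the $p$-coordinates of the chords via (\ref{generating}) gives (D). The only difference is in packaging: where you obtain $T^2=\sigma$ from additivity of the rotation number for commuting circle homeomorphisms plus the fact that an orientation-preserving involution with a fixed point is the identity, the paper derives the same identity by a direct monotonicity contradiction for the lift $\widetilde T_{\alpha}$ -- equivalent content resting on the same ingredients.
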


\begin{remark}
	When $\gamma$ is an ellipse with the semi-axes $a>b>0$  these properties are known. In particular, 
	$$h(\psi)=\sqrt{a^2\cos^2\psi+b^2\sin^2\psi},\quad   \cos2d= \frac{b^2-a^2}{a^2+b^2}\cos 2\psi,\quad  R^2=a^2+b^2.$$
	
\end{remark}
\begin{remark}
	It follows from Theorem \ref{4-periodic} item (D), that the orthoptic curve  associated with  $\gamma$ is a circle of radius  $R$, like in the case of an ellipse.
\end{remark}

\begin{remark}
	It follows from Theorem \ref{4-periodic} that the non-zero Fourier modes of the functions $d$ and $h^2$ (in addition to  the zero one) are of the form $e^{in\psi}, \ {\rm for}\ n\equiv 2\  (\rm mod 4)$. This observation gives the way to construct the examples of billiard tables with an invariant curve filled by $4$-periodic orbits.
\end{remark}

{\begin{remark}
	Let $\gamma$ be a centrally-symmetric billiard table.
	Suppose that the invariant curve $\alpha$ corresponds to a convex caustic, then this caustic is necessarily centrally-symmetric, since $d(\psi)$ is $\pi$-periodic.    Analogously, if the table $\gamma$ is symmetric with respect to $x,y$-axes, then this caustic is symmetric as well. These properties heavily rely on the fact that the invariant curve $\alpha$ consists of periodic orbits.
	We don't know if the result remains valid without this assumption (see \cite{AB} for a hypothetical counterexample and discussion for the case of rotation number $1/3$).
\end{remark}}
\begin{corollary}\label{relations1}
	Let $\gamma$ be a convex centrally-symmetric billiard table. Let $\alpha=\{\delta=d(\psi)\}\subset\mathbf A$ be an invariant curve consisting of $4$-periodic orbits. It then follows from Theorem \ref{4-periodic} item (D) that 
	$$
	h(\psi)= R\sin d(\psi) ,\quad h\left(\psi+\frac{\pi}{2}\right)=R\cos d(\psi), 
	$$
	for a positive constant $R$.
	\end{corollary}
\begin{corollary}
The explicit formulas of item (D) show that the invariant curve $\alpha$ is necessarily $C^2$-smooth, since the support function $h$ is $C^2$-smooth by assumption.
\end{corollary}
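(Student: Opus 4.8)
The plan is to extract an explicit closed form for $d(\psi)$ from the identities of item (D) and then observe that this expression is a composition of $C^2$ maps. Recall that the invariant curve is presented in the coordinates $(\delta,\psi)$ as the graph $\alpha=\{\delta=d(\psi)\}$ over the $\psi$-circle; hence $\alpha$ is a $C^2$ curve if and only if $d(\psi)$ is a $C^2$ function of $\psi$. So the whole matter reduces to the regularity of $d$.

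First I would record that $h$ is indeed $C^2$ as a function of $\psi$: since $\gamma$ is $C^2$ with positive curvature, the radius of curvature $\rho=h''+h=1/k$ is continuous and strictly positive, so $h''=\rho-h$ is continuous and $h\in C^2$. Moreover $h(\psi)>0$ for every $\psi$, because the origin is the centre of symmetry of $\gamma$ and therefore an interior point; equivalently, by Corollary \ref{relations1} one has $h(\psi)=R\sin d(\psi)$ and $h(\psi+\pi/2)=R\cos d(\psi)$ with $0<d<\pi/2$, so both are positive. The shifted function $\psi\mapsto h(\psi+\pi/2)$ is then a positive $C^2$ function as well.

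Now I would invert the first identity of item (D). From $\tan d(\psi)=h(\psi)/h(\psi+\pi/2)$ together with $0<d(\psi)<\pi/2$ (item (C)) we may write
\[
d(\psi)=\arctan\!\left(\frac{h(\psi)}{h(\psi+\pi/2)}\right).
\]
The ratio on the right is a quotient of $C^2$ functions whose denominator never vanishes, hence is $C^2$; and $\arctan$ is $C^\infty$. Therefore $d$ is $C^2$, and consequently $\alpha$ is a $C^2$-smooth curve, as claimed.

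The only point requiring any care --- and the closest thing to an obstacle in this otherwise routine argument --- is guaranteeing that no smoothness is lost when inverting the trigonometric relation, i.e.\ that the argument stays in a region where the inverse function is a genuine diffeomorphism. This is immediate here: by item (C) the continuous function $d$ takes values in the open interval $(0,\pi/2)$, and since the $\psi$-circle is compact its range lies in a compact subinterval $[d_{\min},d_{\max}]\subset(0,\pi/2)$, so $h(\psi+\pi/2)=R\cos d(\psi)$ is bounded away from zero and the $\arctan$ expression is unambiguously $C^2$. Equivalently, one could start from the second identity of (D), writing $\cos 2d(\psi)=\bigl(h^2(\psi+\pi/2)-h^2(\psi)\bigr)/R^2$ and inverting $\cos$ on $[2d_{\min},2d_{\max}]\subset(0,\pi)$, where it is a smooth diffeomorphism; this yields the same conclusion.
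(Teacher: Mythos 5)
Your proof is correct and is essentially the paper's own argument: the corollary is justified there precisely by the formulas of item (D), i.e.\ by writing $d(\psi)=\arctan\bigl(h(\psi)/h(\psi+\pi/2)\bigr)$ and noting that $h$ is $C^2$ and strictly positive, so $d$ (and hence the graph $\alpha=\{\delta=d(\psi)\}$) is $C^2$. Your additional remarks on why $h>0$ and why no smoothness is lost in the inversion are just careful spellings-out of the same route.
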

\begin{figure}[h]
	\centering
	\includegraphics[width=0.9\linewidth]{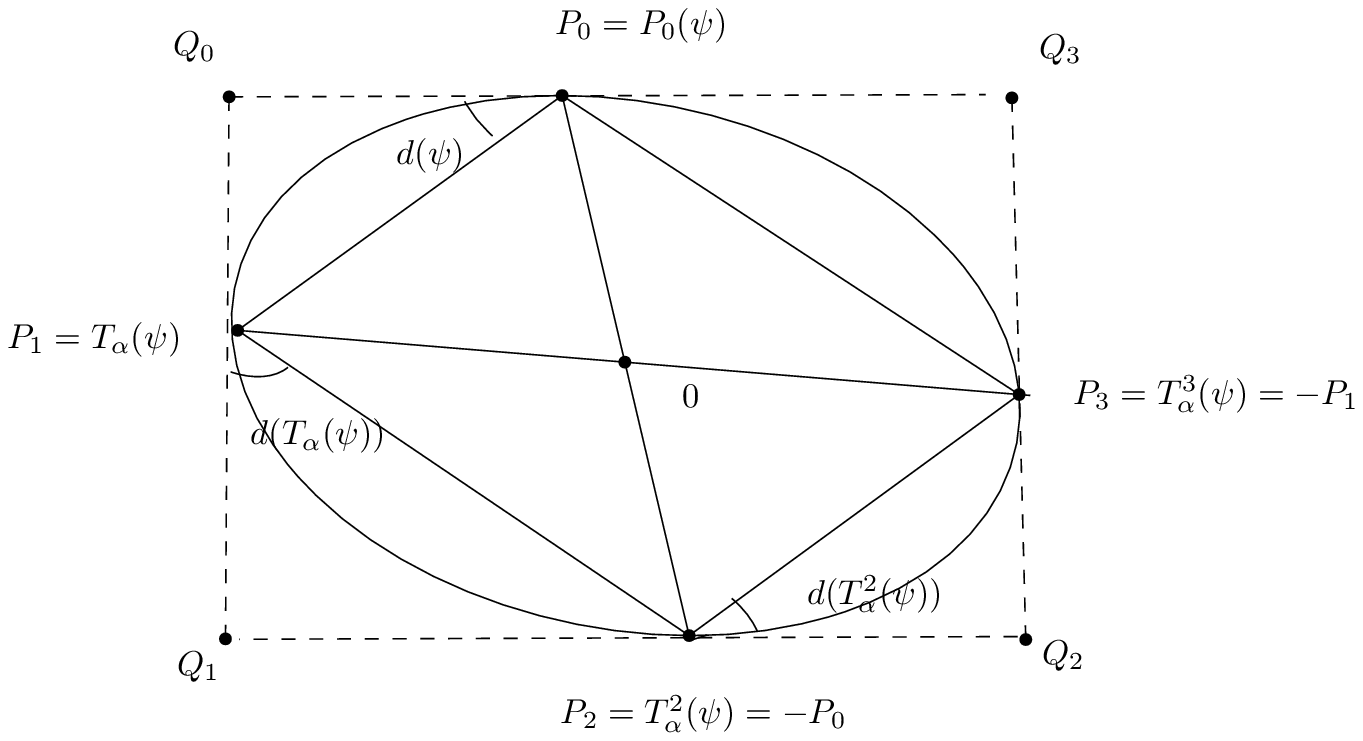}
	\caption{To Theorem \ref{4-periodic}}
	\label{fig:parallelo}
\end{figure}

\begin{proof}
We parametrize $\gamma$ and the invariant curve $\alpha=\{\delta=d(\psi)\}$ by the angle $\psi (\rm mod \ 2\pi)$.

Let us introduce the symmetry map: $S:\mathbf A\rightarrow \mathbf A$, which maps every oriented line intersecting $\gamma$ to the symmetric one with respect to $0$. Mappings $S$ and $T$ commute, since $\gamma$ is centrally-symmetric. Hence $S(\alpha)$ is an invariant curve of $T$ with the same rotation number.

In order to prove $\pi$-periodicity of $d(\psi)$, we shall use the following consequence of Aubry-Mather theory on the structure of minimal orbits with rational rotation number (see for example the survey paper \cite{bangert}, Theorem 5.8):

{\it If $\alpha$ is a rotational invariant curve of the cylinder twist map with rational rotation number which consists solely of periodic orbits, then any other rotational invariant curve with the same rotation number must coincide with $\alpha$.
}

{Indeed, it follows from this theory that all orbits lying on a rotational invariant curve are minimizing. Moreover, if the curve has a rational rotation number $\rho$ and all the orbits on this curve are periodic, then the set of these orbits coincides with the set $\mathcal M_{\rho}$ of the minimizers  with the rotation number $\rho$}.

Applying this corollary to the invariant curves $\alpha$ and $S(\alpha)$ we conclude $$\alpha=S(\alpha),$$ which immediately implies that $d(\psi)$ is $\pi$-periodic.

 We shall denote by $T_{\alpha}:\gamma\rightarrow\gamma$ the map induced by the invariant curve $\alpha$, and  by $\widetilde T_{\alpha}$ the lift  of $T_{\alpha}$ to $\mathbf R$. Thus, $\widetilde T_{\alpha}:\mathbf R\rightarrow \mathbf R$ is strictly monotone increasing and
\begin{equation}\label{T2pi}
\widetilde T_{\alpha}(\psi +2\pi)=\widetilde T_{\alpha}(\psi) +2\pi.
\end{equation}
Since the invariant curve consists of $4$-periodic orbits, 
\begin{equation}\label{T4}
\widetilde T_{\alpha}^{(4)}(\psi)=\psi+2\pi.
\end{equation}

It follows from $\pi$-periodicity of $d$ that
\begin{equation}\label{Tpi}
\widetilde T_{\alpha}(\psi+\pi)=\widetilde T_{\alpha}(\psi)+\pi.
\end{equation}
We claim that 
\begin{equation}\label{T2}
\widetilde T_{\alpha}^2(\psi)=\psi+\pi,
\end{equation}
meaning that the second iteration of a point is the centrally symmetric point.

\begin{figure}[h]
	\centering
	\includegraphics[width=0.7\linewidth]{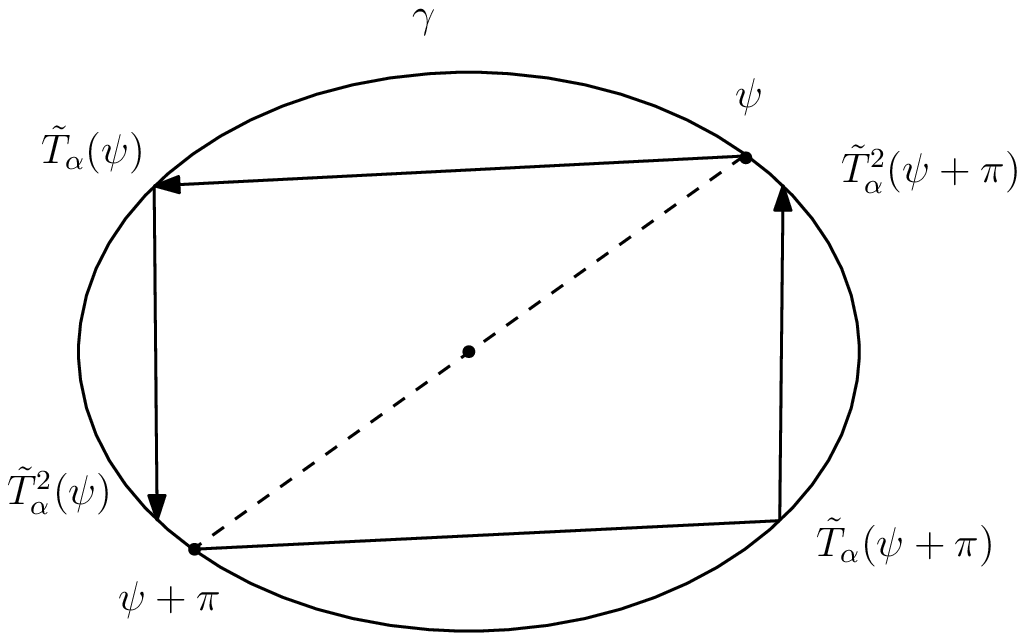}
		\caption{To inequality (\ref{contradiction})}
	\label{fig:inequality}
\end{figure}

Indeed, arguing by contradiction, suppose the  inequality 
\begin{equation}\label{contradiction}
\widetilde T_{\alpha}^2(\psi)<\psi+\pi
\end{equation} holds (the inequality $\widetilde T_{\alpha}^2(\psi)>\psi+\pi$ can be treated similarly).  Then by central symmetry (\ref{Tpi}) we get using (\ref{contradiction})
\begin{equation}\label{pi}
\widetilde T_{\alpha}^2(\psi+\pi)=\widetilde T_{\alpha}^2(\psi)+\pi<\psi+2\pi.
\end{equation}
Hence applying $\widetilde T_{\alpha}^2$ to (\ref{contradiction}) and using (\ref{pi}) we get,
$$
\widetilde T_{\alpha}^4(\psi)<\psi+2\pi,
$$
contradicting (\ref{T4}) (see Fig.\ref{fig:inequality}). This proves the claim (\ref{T2}).

Thus, for any $\psi$, formula (\ref{T2}) shows that the diagonals of the quadrilateral
 $\psi, T_{\alpha}(\psi), T_{\alpha}^2(\psi), T_{\alpha}^3(\psi)$ pass through the center $0$ and are centrally-symmetric. 
This proves property (A) in Theorem \ref{4-periodic}.

Next, we compute the sum of the exterior angles of parallelogram  $P_0P_1P_2P_3$ using the billiard reflection law (see Fig. \ref{fig:parallelo}):

$$
2({	2d(\psi)+2d(T_{\alpha}(\psi))})=2\pi\ \Leftrightarrow d(T_{\alpha}(\psi))+d(\psi)=\pi/2.
$$
Hence, $$T_{\alpha}(\psi)=\psi+\pi/2$$
proving both (B) and (C).

In order to prove (D) we follow the idea used in  \cite{BT} for the case of an ellipse.
Consider the line $P_0P_1$ and write its $p$-coordinate in two ways, using (\ref{generating}):
$$p(P_0P_1)=h(\psi) \cos {d}(\psi) +h'(\psi) \sin {d}(\psi)=$$
$$=h\left( {\psi+\frac{\pi}{2}} \right) \cos {d}\left( {\psi+\frac{\pi}{2}} \right) -h'\left( {\psi+\frac{\pi}{2}} \right) \sin {d}\left( {\psi+\frac{\pi}{2}} \right),$$
Analogously, since the line $P_1P_2$ is  centrally-symmetric with respect to $P_3P_0$, we have:
$$
p(P_1P_2)=p(P_3P_0)=h(\psi) \cos {d}(\psi) -h'(\psi) \sin {d}(\psi)=$$
$$=h\left( {\psi+\frac{\pi}{2}} \right) \cos {d}\left( {\psi+\frac{\pi}{2}} \right)+h'\left( {\psi+\frac{\pi}{2}} \right) \sin {d}\left( {\psi+\frac{\pi}{2}} \right).
$$
Summing and subtracting yields the identities:
$$
h\left( {\psi+\frac{\pi}{2}} \right)\cos{d}\left( {\psi+\frac{\pi}{2}} \right)=h(\psi)\cos{d}(\psi),
$$
$$
-h'\left( {\psi+\frac{\pi}{2}} \right)\sin{d}\left( {\psi+\frac{\pi}{2}} \right)=h'(\psi)\sin{d}(\psi).
$$
Using in these formulas $ d\left(\psi+\frac{\pi}{2}\right)=\frac{\pi}{2}-d(\psi)$ we get:
$$
\cot{d}(\psi)= \frac{h(\psi+\frac{\pi}{2})}{h(\psi)}=-\frac{h'(\psi)}{h'(\psi+\frac{\pi}{2})}.
$$
Also 
$$
h\left (\psi+\frac{\pi}{2}\right)  h'\left(\psi+\frac{\pi}{2}\right)+h(\psi)h'(\psi)=\left(h^2(\psi)+h^2(\psi+\frac{\pi}{2})\right)'=0.
$$
This completes the proof of Theorem \ref{4-periodic}.
	\end{proof}
\section{\bf Proof of Theorem \ref{main2}} We split the proof of Theorem \ref{main2} into three parts.
In the first part \ref{an-inequality}, we derive an integral inequality, in terms of function $d(\psi)$, that must hold under the assumptions of Theorem \ref{main2}.
In the second part \ref{reduction}, we show that for the curves satisfying the relations given by Theorem \ref{4-periodic} this inequality can be reduced to the converse of Wirtinger inequality, which therefore must be the equality.
In the third part \ref{completing}, we analyze the equality case in the Wirtinger inequality and
complete the proof of Theorem \ref{main2}.
\subsection{An inequality}\label{an-inequality}

Let $\gamma$ be a simple closed curve in $\mathbf R^2$ of positive curvature. Assume, the billiard map $T$ restricted to $\mathcal{A}$ has a measurable monotone invariant sub-bundle $\mathcal L$. We shall denote by $\omega(p,\varphi)$ the slope of the line $l(p,\varphi)$,  so that  $\frac{\partial}{\partial \varphi}+\omega\frac{\partial}{\partial p}$ is a positively oriented vector lying on $l(p,\varphi)$. We shall derive now an inequality   using the function $\omega$. 
Let us write the equations (\ref{omega1}) for the function $S$ and $\omega$, which are valid at every point $(p,\varphi)\in\mathcal A$ and its image $(p_1,\varphi_1)=T(p,\varphi))$:
\begin{equation}\label{omegaS}
\begin{cases}
\omega(T(p,\varphi))=S_{22}(\varphi,\varphi_1)+S_{12}(\varphi,\varphi_1)\nu_1^{-1}(p,\varphi),\\
\omega(p,\varphi)=-S_{11}(\varphi,\varphi_1)-S_{12}(\varphi,\varphi_1)\nu_{1}(p,\varphi).
\end{cases} 
\end{equation}
Upon multiplying the first equation by $p_1^2$, the second by $p^2$, and subtracting the results, we get
\begin{equation}\label{eq:S}\begin{split}
p_1^2&\omega(p_1,\varphi_1)-p^2\omega(p,\varphi))=\\
=&p^2S_{11}(\varphi,\varphi_1)+p_1^2S_{22}(\varphi,\varphi_1) +S_{12}(\varphi,\varphi_1)(p^2\nu_{1}(p,\varphi)+p_1^2\nu_1^{-1}(p,\varphi)).
\end{split}
\end{equation}

Using the twist condition and positivity of $\nu_1$ (by Proposition \ref{positive}(b)), we get from (\ref{eq:S}) the inequality:
\begin{equation}\label{ineq:S}
\begin{split}
p_1^2&\omega(p_1,\varphi_1)-p^2\omega(p,\varphi))\geq\\
\geq& p^2S_{11}(\varphi,\varphi_1)+p_1^2S_{22}(\varphi,\varphi_1) +2pp_1S_{12}(\varphi,\varphi_1).
\end{split}
\end{equation}

Notice that  from the estimate  (\ref{estimate}) and Proposition (\ref{derivatives }) it follows that function $\omega$ is bounded on $\mathcal A$:
$$
|\omega|<\max_{\mathcal A} \{|S_{11}|, |S_{22}|\}<K(\gamma),
$$ where $K(\gamma)$ depends only on $\gamma$. Therefore we can
integrate (\ref{ineq:S}) over $\mathcal{A}$ with respect to the invariant measure $d\mu=dpd\varphi$.

In order to perform the integration we compute the invariant measure as follows.

The symplectic form $dp \wedge d\varphi $  can be written using generating function (\ref{generating}):
$$
dp\wedge d\varphi=-d(S_1(\varphi,\varphi_1))\wedge d\varphi=S_{12}d\varphi\wedge d\varphi_1.
$$
Since $T$ is symplectic, the measure
$$d\mu= dp d\varphi=S_{12}d\varphi d\varphi_1,$$
 is invariant.
 Using the explicit formula for the second derivative (Proposition \ref{derivatives }) we compute:
 $$
 d\mu= S_{12}d\varphi d\varphi_1=
 $$
 $$
 =\left(\frac{1}{2}\rho(\psi)\sin\delta\right)d\varphi d\varphi_1=\left(\frac{1}{4}\rho(\psi)\sin\delta\right)d\psi d\delta,$$ where 
 $$\rho(\psi)=h''(\psi)+h(\psi), \quad  \psi:=\frac{\varphi_1+\varphi}{2},\quad \delta:=\frac{\varphi_1-\varphi}{2}.
 $$
 
Hence, integrating the inequality (\ref{ineq:S}) with respect to the invariant measure $d\mu$
we obtain:
$$
0\geq \int_{\mathcal A}[p^2S_{11}(\varphi,\varphi_1)+p_1^2S_{22}(\varphi,\varphi_1) +2pp_1S_{12}(\varphi,\varphi_1)]S_{12}\ d\varphi d\varphi_1.
$$
Substituting the  explicit expressions (\ref{generating}) for $p,p_1$ in terms of the generating function $S$
and  the explicit expression for $d\mu$ we get the inequality
\begin{equation*}
\begin{split}
0&\geq \int_{\mathcal A}\Big[(h(\psi) \cos \delta-h'(\psi) \sin \delta)^2\left(\frac{1}{2}(h''(\psi)-h(\psi))\sin \delta -h'(\psi)\cos \delta\right)+\\
& +(h(\psi) \cos \delta+h'(\psi) \sin \delta)^2\left(\frac{1}{2}(h''(\psi)-h(\psi))\sin \delta +h'(\psi)\cos \delta\right)+\\
& +(h^2(\psi) \cos^2 \delta-h'(\psi)^2\sin^2 \delta)(h''(\psi)+h(\psi))\sin\delta\Big](h''(\psi)+h(\psi))\sin\delta \ d\psi d\delta.
\end{split}
 \end{equation*}
 Simplifying the integrand we get

 \begin{equation}\label{eq-integral-delta}
 \begin{split}
 0&\geq \int_{0}^{2\pi}d\psi\int_{0}^{d(\psi)}d\delta\ \left(\cos^2\delta\sin^2\delta(h''h^2+3h(h')^2)(h+h'')-\right.
 \\
 &- \left.\sin^2\delta\  h(h')^2(h+h'')\right).
 \end{split}
 \end{equation}
 
 Here we used the fact that  in the coordinates $(\psi,\delta)$ the domain of integration is $\mathcal A=\{(\psi,\delta):\psi\in[0,2\pi],\  \delta\in[0,d(\psi)]\}$.
 
 Integrating in (\ref{eq-integral-delta}) with respect to  $\delta$ we obtain
\begin{equation}\label{krokodil1}\begin{split}
 0&\geq \int_{0}^{2\pi} \bigg[-h(h')^2(h+h'')\left(\frac{1}{2}d(\psi)-\frac{1}{4}\sin2d(\psi)\right)+\\
 &+(h''h^2+3h(h')^2)(h+h'')\left(\frac{1}{8}d(\psi)-\frac{1}{32}\sin4d(\psi)\right)\bigg]d\psi.
 \end{split}
\end{equation}

 By the assumption of central symmetry and Corollary \ref{relations1}, we know
 that $h(\psi),d(\psi)$ are $\pi$-periodic and related as follows:
 \begin{equation}\label{relations}
 \begin{cases}h=R\sin d,\\
 h'=R \cos d \ d', \\
 h''=R\cos d\  d''-R\sin d\ ( d')^2,\\
 d(\psi+\frac{\pi}{2})=\frac{\pi}{2}-d(\psi).
 \end{cases}
 \end{equation} Substituting these expressions into inequality (\ref{krokodil1}) we obtain an integral inequality that involves  the function $d(\psi)$ only. 
We perform this substitution in the next Subsection. 
\begin{remark}
	The factors $p^2,p_1^2$ in (\ref{eq:S}) were chosen so that   inequality (\ref{ineq:S}) becomes equality for the case of ellipses. This can be seen using the explicit form of the first integral for an ellipse, which is quadratic in momenta. We shall not dwell upon these details. 
\end{remark}
 \begin{remark}
	If one assumes $C^1$-regularity of the foliation by invariant curves in the hypotheses of Theorems \ref{main1}, then it is rather easy to get directly to the inequality (\ref*{ineq:S}) by suitable differentiation of the invariant graphs of the foliation, without the need for Section \ref{sect:reduction}. 
\end{remark}
\subsection{Reduction to the Wirtinger inequality}\label{reduction}
Let $U(\psi)
$ denote the integrand of (\ref{krokodil1}).

Since the curve $\gamma$ is centrally-symmetric 
$$
\int_0^{2\pi}U(\psi)d\psi=2 \int_0^{\pi}U(\psi)d\psi.
$$
By (\ref{krokodil1})  
\begin{equation}\label{U}
\int_{0}^{\pi}U \ d\psi\leq 0.
\end{equation}
We shall prove now the following
\begin{lemma}
$$
\int_0^{\pi}U(\psi)d\psi=\frac{\pi R^4}{512}\int_{0}^{\pi}((\mu'')^2-4(\mu')^2)\ d\psi,
$$where $\mu(\psi):=\cos (2d(\psi))$.
\end{lemma}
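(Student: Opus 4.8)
The plan is to establish the identity by a direct substitution of the relations (\ref{relations}) into the integrand $U(\psi)$ of (\ref{krokodil1}), followed by integration by parts and a symmetrization built on the relation $d(\psi+\tfrac{\pi}{2})=\tfrac{\pi}{2}-d(\psi)$. First I would write $U=R^{4}\bigl(W_{1}A+W_{2}B\bigr)$, where $W_{1}=\tfrac12 d-\tfrac14\sin 2d$ and $W_{2}=\tfrac18 d-\tfrac1{32}\sin 4d$ are the two weights produced by the $\delta$-integration, and $A,B$ are the remaining factors, now explicit in $d,d',d''$ via $h=R\sin d$, $h'=R\cos d\,d'$, $h''=R\cos d\,d''-R\sin d\,(d')^{2}$ and the common factor $h+h''=R\bigl(\sin d\,(1-(d')^{2})+\cos d\,d''\bigr)$. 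The overall $R^{4}$ matches the right-hand side at once. It is convenient to separate $U$ into its linear-in-$d$ part (coming from the $\tfrac12 d$ and $\tfrac18 d$ terms of $W_{1},W_{2}$) and its purely trigonometric part (coming from $\sin 2d$ and $\sin 4d$).

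The engine of the computation is the involution $\sigma:\psi\mapsto\psi+\tfrac\pi2$, under which $d\mapsto\tfrac\pi2-d$, so that $\sin d\leftrightarrow\cos d$, $d'\mapsto-d'$, $d''\mapsto-d''$, while $\mu=\cos 2d\mapsto-\mu$. Since all data are $\pi$-periodic, every boundary term in an integration by parts over $[0,\pi]$ vanishes, so I may freely use $(d')^{2}d''=\tfrac13\bigl((d')^{3}\bigr)'$ and similar identities to remove all occurrences of $d''$. I would then decompose the resulting integrand into parts even and odd under $\sigma$ and average the integral over the two half-periods. The mechanism producing the constant $\pi$ is the formula $\int_{0}^{\pi} d(\psi)\,f(\psi)\,d\psi=\tfrac{\pi}{4}\int_{0}^{\pi} f\,d\psi$, valid for any $\sigma$-invariant $f$, which follows immediately from $d(\psi)+d(\psi+\tfrac\pi2)=\tfrac\pi2$. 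The expectation is that after the integrations by parts the trigonometric part and the $\sigma$-odd remnants cancel, while the $\sigma$-even part of the linear-in-$d$ term survives and feeds this formula.

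Finally I would rewrite the surviving integrand in terms of $\mu=\cos 2d$, using $\mu'=-2\sin 2d\,d'$ and $\mu''=-4\cos 2d\,(d')^{2}-2\sin 2d\,d''$, and independently expand $(\mu'')^{2}-4(\mu')^{2}$ in the variables $d,d',d''$ (integrating by parts once more to clear the mixed term $\cos 2d\,\sin 2d\,(d')^{2}d''$). Matching the two expressions term by term then determines the numerical factor $\tfrac1{512}$ and completes the proof.

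The main obstacle is that none of these cancellations are visible at the level of individual terms: the simplification only emerges after the full $\sigma$-symmetrization and several integrations by parts, and the intermediate polynomials in $d',d''$ with trigonometric coefficients are large and unwieldy. For this reason it is prudent, as already indicated, to verify the identity numerically before attempting the exact reduction; a disciplined bookkeeping scheme, grading monomials both by total order in the derivatives of $d$ and by parity under $\sigma$, is essential to keep the algebra under control.
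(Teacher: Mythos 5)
Your plan follows the same route as the paper's own proof: the split of $U$ into a linear-in-$d$ piece and a trigonometric piece is the paper's decomposition $U=U_1+U_2+U_3$ (with $U_3$ the $d$-linear part), your involution $\sigma:\psi\mapsto\psi+\frac{\pi}{2}$ is exactly the paper's Step 1 (symmetrization), removing $d''$ by parts using $\pi$-periodicity is Step 2, and the passage to $\mu=\cos 2d$ with one last integration by parts is Step 3. Your averaging formula $\int_0^{\pi}d\,f\,d\psi=\frac{\pi}{4}\int_0^{\pi}f\,d\psi$ for $\sigma$-invariant $f$ is a correct and clean way to explain where the overall factor $\pi$ comes from; the paper gets the same effect by writing $V_3=\frac{1}{2}\bigl(U_3+\hat U_3\bigr)$.

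However, as a proof of this particular lemma the proposal has a genuine gap: the lemma \emph{is} the computation, and the computation is never performed. Two specific points are left as unverified expectations. First, the cofactor $G$ of $d$ in the $d$-linear part, namely $G=\frac{1}{8}h(h+h'')\bigl(hh''-(h')^2\bigr)=\frac{R^4}{16}\sin d\bigl((1-(d')^2)\sin d+d''\cos d\bigr)\bigl(\frac{1}{2}\sin 2d\,d''-(d')^2\bigr)$, is neither $\sigma$-even nor $\sigma$-odd, so your averaging formula disposes only of its even part; the term $\int_0^{\pi} d\cdot G_{-}\,d\psi$ coming from the odd part survives symmetrization and must be killed by the integration-by-parts machinery together with the trigonometric terms. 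That this cancellation actually happens (and with what coefficients) is precisely the content of the lemma, and you state it only as ``the expectation is that \dots cancel.'' Second, the final constants $-4(\mu')^2$ and $\frac{\pi R^4}{512}$ are what make the Wirtinger argument in the next subsection work with \emph{equality} for ellipses; a sign or coefficient error here destroys the whole theorem, so ``matching the two expressions term by term then determines the numerical factor'' cannot be left implicit, and a numerical check is not a substitute. To close the gap you would need to carry out what the paper does explicitly: compute the symmetrized integrands $V_1,V_2,V_3$, integrate by parts to obtain $W_1,W_2,W_3$ free of $d''$ (except the $(d'')^2$ term), verify the collapse to $W=-\frac{\pi R^4}{32}\sin^2(2d)(d')^2+\frac{\pi R^4}{192}(3-\cos 4d)(d')^4+\frac{\pi R^4}{128}\sin^2(2d)(d'')^2$, and only then convert to $\mu$.
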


\begin{proof}
	To simplify the calculations,  we split $U(\psi)$ as
	$$
	U(\psi)=U_1(\psi)+U_2(\psi)+U_3(\psi),
	$$
	where
	\begin{align*}
		& U_1=\frac{1}{4}h(h')^2(h+h'')\sin(2d), &\\
		& U_2=-\frac{1}{32}h(h+h'')(3(h')^2+hh'')\sin(4d), &\\
		&U_3=\frac{1}{8}h(h+h'')\left(hh''-(h')^2\right)d.&
	\end{align*}

	Here and below in the proof we often omit the argument $\psi$ in the functions $h$ and $d$. 
	
	Using (\ref{relations}) we have
	\begin{align*}
		& U_1(\psi)=\frac{R^4}{8}(d')^2\sin^2(2 d)\left((1-(d')^2)\frac{\sin(2 d)}{2}+d''\cos^2(d)\right),&\\
		& U_2(\psi)=-\frac{R^4}{32}\sin(4d)\left((1-(d')^2)\sin^2(d)+
		\frac{\sin(2d)}{2}d''\right)&\\
		& \quad\quad\qquad \times\left((d')^2(4\cos^2(d)-1)+\frac{\sin(2d)}{2}d''\right),&\\
		& U_3(\psi)=\frac{R^4}{16}d\sin(d)\Big((1-(d')^2)\sin(d)+d''\cos(d)\Big)\left(d''\sin(2d)-2(d')^2\right).&
	\end{align*}
	
	\vspace*{0.3cm}
	We shall simplify the integral of $U$ in three steps. 
	\vspace*{0.3cm}
	
	{\underline{Step 1. Symmetrization.}}

	We perform the change  of the integration variable by the rule $\psi\rightarrow\psi+\frac{\pi}{2}$.  By (\ref{relations}), this intertwines $\sin (d)$ with $\cos(d)$ and changes the  sign of $d''$ and $\sin(4d)$.  Denote the changed integrand by $\hat U_j$
	
	\begin{align*}
		& \hat U_1(\psi)=\frac{R^4}{8}(d')^2\sin^2(2 d)\left((1-(d')^2)\frac{\sin(2 d)}{2}-d''\sin^2(d)\right),&\\
		& \hat U_2(\psi)=\frac{R^4}{32}\sin(4d)\left((1-(d')^2)\cos^2(d)-
		\frac{\sin(2d)}{2}d''\right)&\\
		& \quad\quad\qquad \times\left((d')^2(4\sin^2(d)-1)-\frac{\sin(2d)}{2}d''\right),&\\
		& \hat U_3(\psi)=\frac{R^4}{16}\left(\frac{\pi}{2}-d\right)\cos(d)\Big((1-(d')^2)\cos(d)-d''\sin(d)\Big)&\\
		&\quad\quad\quad\quad\times\left(-d''\sin(2d)-2(d')^2\right).&
	\end{align*}
	And denote the "symmetrized" integrand by 
	$$
	V_j:=\frac{1}{2}(U_j+\hat U_j).
	$$ 
	Then we have
	
	$$
	\int_0^{\pi}U_j(\psi)d\psi=\int_0^{\pi}\hat U_j(\psi)d\psi=\int_0^{\pi}V_j(\psi) d\psi,
	$$
	where $V_j$ can be written as:
	
	\begin{flalign*}
		&
		V_1(\psi)=\frac{R^4}{16}(d')^2\sin^2(2d)\left((1-(d')^2)\sin(2d)+d''\cos(2d)\right),
		&\\
		&
		V_2(\psi)=\frac{R^4}{64}\sin(4d)&\\
		&
		\qquad\qquad\ 	\times\left[\left((1-(d')^2)\cos^2(d)-\frac{\sin(2d)}{2}d''\right)
		\left((d')^2(4\sin^2(d)-1)-\frac{\sin(2d)}{2}d''\right)\right.&\\
		&
		\qquad\qquad\ \left.-\left((1-(d')^2)\sin^2(d)+\frac{\sin(2d)}{2}d''\right)
		\left((d')^2(4\cos^2(d)-1)+\frac{\sin(2d)}{2}d''\right)\right]&\\
		&
		\quad\quad\ =\frac{R^4}{128}\sin(4d)\left(2(d')^2((d')^2-1)\cos(2d) -d''(1+(d')^2)\sin(2d)\right),
		&\\
		&
		V_3(\psi) =\frac{R^4}{32}\left[d\sin(d)\left((1-(d')^2)\sin(d)+d''\cos(d)\right)\left(d''\sin(2d)-2(d')^2\right)\right.&\\
		&
		\qquad\qquad\left.-\cos(d) \left(\frac{\pi}{2}-d\right) \left((1-(d')^2)\cos(d) -d''\sin(d)\right)
		\left(d''\sin(2d)+2(d')^2\right)\right]&\\
		&
		\quad\quad\ 	=\frac{R^4}{32}\left(\pi\cos^2(d)-2d\cos(2d)\right)
		\left((d')^4-(d')^2\right)+
		\frac{\pi R^4}{128}\sin^2(2d)(d'')^2&\\
		&	
		\qquad\quad\quad +\frac{R^4}{64}\sin(2d)\left(2d-\pi\cos^2(d)\right)d''&\\
		&
		\qquad\qquad 	+\frac{R^4}{128}\sin(2d)\left(\pi(3+\cos(2d))-12 d\right)(d')^2d''.&\\
	\end{flalign*}
	{\underline{Step 2. Integration by parts.}} 
	\vspace*{0.5cm}
	
	We  apply  integration by parts in order to get rid of the second derivative $d''$. Notice that thanks to the $\pi$-periodicity of the integrands, the  off-integration terms vanish. We get
	\begin{align*}
		&
		\int_0^{\pi}V_1d\psi=\int_0^{\pi}\frac{R^4}{16}(d')^2\left(1-(d')^2\right)\sin^3(2d)d\psi\,&\\
		&\qquad\quad\quad\quad +\int_0^{\pi}\frac{R^4}{16} \sin^2(2d)\cos (2d) \ \left(\frac{1}{3} \frac{d} {d\psi} (d')^3\right)\ d\psi\\
		&
		\quad\quad\quad\quad=\int_0^{\pi}\frac{R^4}{16}(d')^2\left(1-(d')^2\right)\sin^3(2d)d\psi\,&\\
		&\qquad\quad\quad\quad -\int_0^{\pi}\frac{d}{d\psi}\left(\frac{R^4}{16} \sin^2(2d)\cos (2d)\right) \ \frac{(d')^3}{3}   
		\ d\psi
		=\int_0^{\pi}W_1(\psi)d\psi,
		&
	\end{align*}
	where
	$$
	W_1=\frac{R^4}{16} \left(\sin^3(2d) (d')^2-
	\left(4 \cos^2(2d)\sin(2d)+\sin^3(2d)\right)\frac{(d')^4}{3} \right).
	$$
	Next, to simplify $\int_0^{\pi}V_2d\psi$ we represent it as a sum of three terms and apply again the integration by parts
	
	\begin{align*}
		&
		\int_0^{\pi}V_2d\psi=\int_0^{\pi}\frac{R^4}{64}\cos(2d)\sin(4d) 
		\left((d')^4-(d')^2\right)d\psi&\\
		&
		\qquad\quad\quad\quad -\int_0^{\pi}\frac{R^4}{128} \sin(2d)\sin (4d) \ \left(\frac{d} {d\psi} (d')\right)\ d\psi&\\
		&
		\qquad\quad\quad\quad-\int_0^{\pi}\frac{R^4}{128} \sin(2d)\sin (4d) \ \left(\frac{1}{3} \frac{d} {d\psi}(d')^3\right)\       d \psi\\
		&
		\qquad\qquad=\int_0^{\pi}\frac{R^4}{64}\cos(2d)\sin(4d) 
		\left((d')^4-(d')^2\right)d\psi&\\
		&
		\qquad\quad\quad\quad+ \int_0^{\pi}\frac{d}{d\psi}\left(\frac{R^4}{128} \sin(2d)\sin (4d)\right) \  (d')\ d\psi&\\
		&
		\qquad\quad\quad\quad+\int_0^{\pi}\frac{d}{d\psi}\left(\frac{R^4}{128} \sin(2d)\sin (4d) \right)\ \left(\frac{(d')^3}{3} \right)\ d\psi
		=\int_0^{\pi}W_2(\psi)\ d\psi,
		&
	\end{align*}
	where
	\begin{align*}
		&	
		W_2=\frac{R^4}{64}\cos(2d)\sin(4d)((d')^4-(d')^2)
		+\frac{R^4}{64}\cos(2d)\sin(4d)(d')^2
		&\\
		&
		\qquad\ +\frac{R^4}{32}\sin(2d)\cos(4d)(d')^2+\frac{R^4}{192}\cos(2d)\sin(4d)(d')^4
		&\\
		&
		\qquad\  +\frac{R^4}{96}\sin(2d)\cos(4d)(d')^4
		&\\
		&
		\quad\ =\frac{R^4}{32}\sin(2d)\cos(4d)(d')^2+\frac{R^4}{96}\sin(2d)\left(2+3\cos(4d)\right)(d')^4.
		&
	\end{align*}	
	Finally,	similar calculations gives us
	\begin{align*}
		&
		\int_0^{\pi}V_3(\psi)=\int_0^{\pi}\frac{R^4}{32}\left(\pi\cos^2(d)-2d\cos(2d)\right)
		\left((d')^4-(d')^2\right) d  \psi&\\
		&\qquad\quad\quad\quad\ 
		+\int_0^{\pi}\frac{\pi R^4}{128}\sin^2(2d)(d'')^2d\psi&\\
		&\qquad\quad\quad\quad\ 
		+\int_0^{\pi}\frac{R^4}{64}\sin(2d)\left(2d-\pi\cos^2(d)\right)\ 
		\left(\frac{d} {d\psi} (d')\right)\ d\psi&\\
		&\qquad\quad\quad\quad\ 
		+ \int_0^{\pi}\frac{R^4}{128}\sin(2d)\left(\pi(3+\cos(2d))-12 d\right)\ 
		\left(\frac{1}{3} \frac{d} {d\psi}(d')^3\right)\ d\psi\\
		&
		\quad\quad \quad\quad =\int_0^{\pi}\frac{R^4}{32}\left(\pi\cos^2(d)-2d\cos(2d)\right)
		\left((d')^4-(d')^2\right) d  \psi&\\
		&\qquad\quad\quad\quad\ 
		+\int_0^{\pi}(\frac{\pi R^4}{128}\sin^2(2d)(d'')^2d\psi&\\
		&\qquad\quad\quad\quad\ 
		-\int_0^{\pi}\frac{d}{d\psi}\left(\frac{R^4}{64}\sin(2d)\left(2d-\pi\cos^2(d)\right)\right)\ 
		(d')\ d\psi&\\
		&\qquad\quad\quad\quad\ 
		- \int_0^{\pi}\frac{d}{d\psi}\left(\frac{R^4}{128}\sin(2d)\left(\pi(3+\cos(2d))-12 d\right)\right)\ 
		\frac{(d')^3}{3}\ d\psi
		=\int_0^{\pi}W_3(\psi)\ d\psi,
		&
	\end{align*}
	where	
	\begin{align*}
		&	
		W_3=\frac{\pi R^4}{128}\sin^2(2d)(d'')^2+
		\frac{R^4}{32}\left(\pi\cos^2(d)-2d\cos(2d)\right)
		\left((d')^4-(d')^2\right)&\\
		&\quad\  \quad 
		-\frac{R^4}{32}\cos(2d)(2d-\pi\cos^2(d))(d')^2-
		\frac{R^4}{32}\sin(2d)(1+\pi\cos(d)\sin(d))(d')^2&\\
		&\quad\  \quad 
		-\frac{R^4}{64}\cos(2d)\left(\pi(3+\cos(2d))-12d\right)\frac{(d')^4}{3}+
		\frac{R^4}{64}\sin(2d)\left(\pi\sin(2d)+6\right)\frac{(d')^4}{3}	&\\
		&\  \quad 
		=\frac{\pi R^4}{128}\sin^2(2d)(d'')^2&\\
		&\quad\  \quad 
		-\frac{R^4}{32}\left(\sin(2d)+\pi\left(\cos^2(d)(1-\cos(2d))+\frac{1}{2}\sin^2(2d)\right)    \right)(d')^2 &\\
		&\quad\  \quad 
		+ \frac{R^4}{32}\left(\sin(2d)+\pi\left(\cos^2(d)-\frac{1}{6}\cos(2d)(3+\cos(2d))+\frac{1}{6}\sin^2(2d)\right)\right)(d')^4
		&\\
		&\ \quad 
		=\frac{\pi R^4}{128}\sin^2(2d)(d'')^2-\frac{R^4}{32}\sin(2d)
		\left(1+\pi\sin(2d)\right)(d')^2&\\
		&\quad\quad\  
		-\frac{R^4}{192}\left(\pi\cos(4d)-3\left(\pi+2\sin(2d)\right)\right)(d')^4.
		&
	\end{align*}

	We conclude that $\int_0^{\pi}U(\psi)d\psi=\int_0^{\pi}W(\psi)d\psi,$
	where 
	
	\begin{align*}
		&
		W=W_1+W_2+W_3=\frac{\pi R^4}{128}\sin^2(2d)(d'')^2&\\
		&
		\quad\quad +\frac{R^4}{32}\left(2\sin^3(2d)+\cos(4d)\sin(2d)-\sin(2d)(1+\pi\sin(2d))\right)(d')^2&\\
		&\quad\quad 
		+\frac{R^4}{48}\left(\frac{\sin(2d)}{2}(2+3\cos(4d))-
		(4\cos^2(2d)\sin(2d)+\sin^3(2d))\right.&\\
		&\quad\quad 
		-\left.\frac{1}{4}\left(\pi\cos(4d)-3(\pi+2\sin(2d))\right)\right)(d')^4&\\
		&\quad=-\frac{\pi R^4}{32}\sin^2(2d)(d')^2+
		\frac{\pi R^4}{192}\left(3-\cos(4d)\right)(d')^4+\frac{\pi R^4}{128}\sin^2(2d)(d'')^2.
		&
	\end{align*} 
	$$
	$$
	
	{\underline{Step 3. Change of the function.}} 
	\vspace*{0.5cm}
	
	For the last step in the proof, we introduce the new function $$
	\mu(\psi)=\cos(2d(\psi)).
	$$
	We want to express $W$
	as a function of $\mu$.
	We have
	$$
	\mu'=-2\sin(2d) d',\ \ \mu''=-4\cos(2d)(d')^2-2\sin(2d)d'',
	$$
	and hence
	$$
	\sin^2(2d)(d'')^2=\frac{1}{4}(\mu'')^2-4\cos^2(2d)(d')^4-2\sin(4d)(d')^2d''.
	$$
	Substituting this in the expression for $W(\psi)$, we get
	$$
	W=-\frac{\pi R^4}{64}\sin(4d)(d')^2d''-
	\frac{\pi R^4}{384}\left(12\sin^2(2d)(d')^2+8\cos(4d)(d')^4-3(\mu'')^2/4\right).
	$$
	Another integration by parts yields
	\begin{align*}
		&
		\int_0^{\pi}W(\psi)d\psi=
		&\\
		&\quad-\int_0^{\pi}\frac{\pi R^4}{64}\sin (4d)\ \left( \frac{1}{3} \frac{d} {d\psi}(d')^3\right)\ d\psi
		&\\
		&
		\quad -\int_0^{\pi}\frac{\pi R^4}{384}\left(12\sin^2(2d)(d')^2+8\cos(4d)(d')^4-3(\mu'')^2/4\right)d\psi&\\
		&
		=\int_0^{\pi}\frac{d}{d\psi}\left(\frac{\pi R^4}{64}\sin (4d)\right)
		\frac{(d')^3}{3}  d\psi&\\
		&\quad-\int_0^{\pi}\frac{\pi R^4}{384}\left(12\sin^2(2d)(d')^2+8\cos(4d)(d')^4-3(\mu'')^2/4\right)d\psi&\\
		&
		=\int_0^{\pi}P(\psi)d\psi,
		&\\
	\end{align*}
	where
	$$
	P=\frac{\pi R^4}{512}((\mu'')^2+8(\cos(4d)-1)(d')^2)=\frac{\pi R^4}{512}((\mu'')^2-4(\mu')^2).
	$$
	Thus we proved 
	$$
	\int_0^{\pi}U(\psi)d\psi=\int_0^{\pi}P(\psi)d\psi=\frac{\pi R^4}{512}((\mu'')^2-4(\mu')^2).
	$$
	This completes the proof of lemma.
\end{proof}
\subsection{\bf Completing the proof of Theorem \ref{main2}}\label{completing}
Now we are in position to finish the proof of Theorem \ref{main2}. By definition, $\mu$ is a $\pi$-periodic function and therefore Wirtinger's inequality can be applied to $\mu'$ (note the factor $4$ due to period $\pi$ and not $2\pi$).  Thus, we have:
$$
\int_{0}^{\pi}U\ d\psi=\int_{0}^{\pi}P\ d\psi=\frac{\pi R^4}{512}\int_{0}^{\pi}((\mu'')^2-4(\mu')^2)d\psi\geq 0.
$$ 
But this is exactly opposite to the inequality (\ref{krokodil1}) we started with.
Hence, we are in the equality case in the Wirtinger's inequality and therefore
$$
\mu(\psi)=\cos(2d(\psi))=a\cos(2\psi)+b\sin(2\psi)+c.$$
We can rewrite this in the form
\begin{equation}\label{A}
\cos(2d(\psi))=A\cos(2\psi+\alpha) +c, \ A\geq 0.
\end{equation}
 
 Shifting $\psi$, if needed, we can assume $\alpha=0$.  
 
 Now we claim that $c$ must vanish.
 Indeed, by Theorem \ref{4-periodic}, 
 $$
 R^2\cos(2d(\psi))=R^2(\cos^2 d(\psi)-\sin^2 d(\psi))=h^2(\psi+\pi/2)-h^2(\psi).
 $$
 Notice that the last expression  has average $0$ over the period $[0,\pi]$. Therefore, in (\ref{A}) $c=0$.
 
 Moreover, $A\in [0,1)$, since (\ref{A}) holds for all values of $\psi$ ($A=1$ is not allowed since $d$ does not vanish).
 Consequently, $$
 1-2\sin^2 d(\psi)=A\cos2\psi, \quad A\in [0,1).
 $$
Hence, by Theorem \ref{4-periodic},  we obtain 
\begin{align*}
	&
	h^2 (\psi) =R^2\sin^2 d(\psi)=\frac{R^2}{2}(1-A\cos 2\psi)=
	&\\
	&
	\qquad\ \  =\frac{R^2(1-A)}{2}\cos^2\psi+\frac{R^2(1+A)}{2}\sin^2\psi.
	&
\end{align*}
The square root of the last expression is the support function of an ellipse, for every $A\in[0,1)$ (the case $A=0$ gives a circle).
This completes the proof of Theorem \ref{main2}.

\end{document}